\newtheorem{theorem}{Theorem}
\newtheorem{lemma}{Lemma}
\newtheorem*{proof*}{Proof}
\begin{document}
\title{Local and global results for modified Sz\'{a}sz - Mirakjan operators}
\author[1, 2, a]{R.B. Gandhi} \author[1, 3, b, *]{Vishnu Narayan Mishra}
\affil[1]{\scriptsize Applied Mathematics \& Humanities Department, Sardar Vallabhbhai National Institute of Technology, Ichchhanath Mahadev, Dumas Road, Surat - 395 007, (Gujarat), India}
\affil[2]{\scriptsize Department of Mathematics, BVM Engineering College, Vallabh Vidyanagar - 388 120, (Gujarat), India}
\affil[3]{\scriptsize L. 1627 Awadh Puri Colony Beniganj, Phase-III, Opposite - Industrial Training Institute (ITI), Ayodhya Main Road, Faizabad, Uttar Pradesh 224 001, India}
\date{}

\maketitle

\let\thefootnote\relax\footnotetext{\scriptsize E-mails: rajiv55in@yahoo.com$^{a}$; vishnunarayanmishra@gmail.com$^{b}$  \\ \scriptsize $^{*}$Corresponding author}

\begin{abstract}
\noindent \scriptsize In this paper, we study a natural modification of Sz\'{a}sz - Mirakjan operators. It is shown by discussing many important established results for Sz\'{a}sz - Mirakjan operators. The results do hold for this modification as well, be they local in nature or global, be they qualitative or quantitative. It is also shown that this generalization is meaningful by means of examples and graphical representations. \\ \\

\noindent \textit{KeyWords and Phrases}: Sz\'{a}sz - Mirakjan operators, the Korovkin-type approximation result, K-functional,  modulus of smoothness, Voronovskaja-type result, polynomial weight spaces, direct and inverse results. \\

\textit{AMS Subject Classifications (2010): 41A10, 41A25, 41A36, 40A30}

\end{abstract}

\section{Introduction}
Mishra et al. \cite{RBG} introduced Sz\'{a}sz-Mirakjan-Durrmeyer-type generalization given by 
\begin{equation}\label{RBG}
D_n^{*}(f;x) = b_n\sum_{k=0}^{\infty} s_{b_n,k}(x)\int_0^{\infty}s_{b_n,k}(t)f\left(t\right)\, dt,
\end{equation}
where 
\begin{equation}\label{Sza}
s_{b_n,k}(x)=e^{-b_nx}\dfrac{(b_nx)^k}{k!}, \quad k = 0,1,2, \ldots; n \in \mathbb{N},
\end{equation}
$(b_n)_{1}^{\infty}$ is an increasing sequence of positive real numbers, $ b_n \rightarrow \infty$ as $n \rightarrow \infty$, $b_1 \geq 1$ and studied the simultaneous approximation properties of the operators (\ref{RBG}). References \cite{RBG1} and \cite{RBG2} contains some more work in this direction.

This type of generalization was introduced by Durrmeyer \cite{Dur} generalizing Bernstein polynomials by introducing both the summation and integration processes and introduced the summation-integral type approximation process, using the Bernstein polynomials, as follows:
\begin{equation}\label{Dur}
D_n(f;x)=(n+1)\sum_{k=0}^n b_{n,k}(x)\left(\int_{0}^1 b_{n,k}(t)f(t)\,dt \right),
\end{equation}
where 
\begin{equation*}
b_{n,k}(x)= \left(\begin{array}{c} n \\ k  \end{array} \right)x^k (1-x)^{n-k}, \quad k=0,1, \ldots, n,
\end{equation*}
$b_{n,k}$'s are called the Bernstein basis functions.
\\
Derriennic \cite{Der} proved several results concerning these operators, including the approximation of the $r^{\text{th}}$-derivative of a function by operators $D_n$. 
\\
Sz\'{a}sz \cite{Sza} and Mirakjan \cite{Mir} introduced and studied operators on unbounded interval $[0, \infty)$, known as Sz\'{a}sz-Mirakjan operators given by 
\begin{equation}\label{Mir}
M_n(f;x)= \sum_{k=0}^{\infty}s_{n,k}(x)f\left(\dfrac{k}{n}\right),
\end{equation}
where
\begin{equation}\label{Sza1}
s_{n,k}(x)=e^{-nx}\dfrac{(nx)^k}{k!}, \quad k = 0,1,2, \ldots; n \in \mathbb{N}.
\end{equation}
Here $s_{n,k}$'s are known as Sz\'{a}sz basis functions. The operators $\left(M_n \right)_{1}^{\infty}$ were extensively studied in 1950 by O. Sz\'{a}sz \cite{Sza}. 
\\
Mazhar and Totik \cite{Maz} introduced two Durrmeyer type 
modifications of Sz\'{a}sz-Mirakjan operators (\ref{Mir}), which are defined on unbounded interval $[0, \infty)$ as
\begin{equation*}
T^{*}_n(f;x)=f(0)s_{n,0}(x)+ n\sum_{k=1}^{\infty}s_{n,k}(x)\int_{0}^{\infty}s_{n,k-1}(t)f(t)\,dt
\end{equation*}
and
\begin{equation}\label{Maz1}
T_n(f;x)= n\sum_{k=0}^{\infty}s_{n,k}(x)\int_{0}^{\infty}s_{n,k}(t)f(t)\,dt,
\end{equation}
where $s_{n,k}$'s are as given by (\ref{Sza1}).
\\
Operators in (\ref{RBG}) are generalization of the operators (\ref{Maz1}) by means of the introduction of sequence $(b_n)_{1}^{\infty}$ for $n$. This introduction of sequence $(b_n)_{1}^{\infty}$ is very natural and it is shown in this paper by introducing it on operators (\ref{Mir}) and considering the operators, denoted by $S_n$, defined as follows:
\begin{equation}\label{RBG1}
S_n(f;x)= \sum_{k=0}^{\infty}s_{b_n,k}(x)f\left(\dfrac{k}{b_n}\right),
\end{equation}
where $s_{b_n,k}$'s are as given in (\ref{Sza}), $(b_n)_1^{\infty}$ is an increasing sequence of positive real numbers, $ b_n \rightarrow \infty$ as $n \rightarrow \infty$, $b_1 \geq 1$. Clearly, for $b_n = n $, we get (\ref{Mir}).
\\
Walczak, in \cite{Wal1} introduced a generalization of (\ref{Mir}) given as follows:
\begin{equation}\label{Wal}
S_n[f; a_n, b_n, q, x] := \sum_{k=0}^{\infty}s_{a_n,k}(x)f\left(\dfrac{k}{b_n + q}\right),
\end{equation}
where $q \geq 0$ is a fixed number, $(a_n)_1^{\infty}$ and $(b_n)_1^{\infty}$ are given increasing and unbounded numerical sequences such that $b_n \geq a_n \geq 1$, and $(a_n/b_n)_1^{\infty}$ is non-decreasing and 
\begin{equation*}
\dfrac{a_n}{b_n} = 1 + o\left(\dfrac{1}{b_n} \right).
\end{equation*}
It can be observed that for $a_n = b_n = n$ and $q = 0$, (\ref{Wal}) reduces to (\ref{Mir}) and for $a_n = b_n$ and $q = 0$, it reduces to (\ref{RBG1}). Walczak \cite{Wal1} discussed direct results related to pointwise and uniform convergence of the operators (\ref{Wal}) in exponential weight spaces.
\\
In Section $2$ of the paper, we discuss some results related to the operators (\ref{RBG1}) while operating upon test functions and calculations of moments for the operators (\ref{RBG1}) is carried out. In Section $3$, local results related to the operators (\ref{RBG1}) are derived. The global properties of the operators (\ref{RBG1}) are derived in Section $4$ in polynomial weight spaces where a characterization is established. It can be seen that these direct and indirect results are natural extensions of the results derived in \cite{Bec} for (\ref{Mir}) in polynomial weight spaces. Section $5$ discusses examples of the sequences $(b_n)_1^{\infty}$ and the corresponding approximate graphical representations under the operators (\ref{Mir}) and (\ref{RBG1}). 

\section{Elementary results}
\subsection{Estimation of moments}
To begin with, we give some auxiliary results. 
\begin{lemma} \label{L3}
For $e_i(t)=t^i, \quad i = 0,1,2,3,4$, the following holds:
\begin{enumerate}
\item[(a)]$S_n(e_0;x)=1,$
\item[(b)]$S_n(e_1;x)=x,$
\item[(c)]$S_n(e_2;x)=\dfrac{1}{b_n}(b_nx^2+x),$
\item[(d)]$S_n(e_3;x)=\dfrac{1}{b_n^2}(b_n^2x^3+3b_nx^2+x),$
\item[(e)]$S_n(e_4;x)=\dfrac{1}{b_n^3}(b_n^3x^4 + 6b_n^2x^3+7b_nx^2+x).$
\end{enumerate}
\end{lemma}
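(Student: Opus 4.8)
The plan is to exploit the fact that, for fixed $x$, the basis functions $s_{b_n,k}(x)=e^{-b_nx}(b_nx)^k/k!$ are precisely the point masses of a Poisson distribution with parameter $\lambda:=b_nx$, so that computing $S_n(e_i;x)$ reduces to evaluating the $i$-th raw moment of that distribution and rescaling. Concretely, since $e_i(k/b_n)=k^i/b_n^i$, I would write
\[
S_n(e_i;x)=\frac{1}{b_n^{\,i}}\sum_{k=0}^{\infty} k^i\, s_{b_n,k}(x),
\]
and thereby reduce the whole lemma to computing the sums $\sigma_i(x):=\sum_{k\geq 0} k^i s_{b_n,k}(x)$ for $i=0,1,2,3,4$.

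The key computational device is the shift identity
\[
k^{\underline{j}}\, s_{b_n,k}(x) = (b_n x)^j\, s_{b_n,k-j}(x),\qquad k^{\underline{j}}:=k(k-1)\cdots(k-j+1),
\]
which follows directly from $\frac{k!}{(k-j)!}\cdot\frac{(b_nx)^k}{k!}=(b_nx)^j\frac{(b_nx)^{k-j}}{(k-j)!}$. Summing over $k$ and using the normalization $\sum_k s_{b_n,k}(x)=1$ (which itself gives part (a), via $e^{-b_nx}\sum_k (b_nx)^k/k!=1$) yields the factorial moments
\[
\sum_{k=0}^{\infty} k^{\underline{j}}\, s_{b_n,k}(x) = (b_n x)^j,\qquad j=0,1,2,3,4.
\]

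To obtain parts (b)--(e) I would then convert ordinary powers into falling factorials by way of the Stirling numbers of the second kind, $k^i=\sum_{j} S(i,j)\,k^{\underline{j}}$, so that $\sigma_i(x)=\sum_j S(i,j)(b_nx)^j$. Reading off the small coefficients ($S(2,\cdot)=1,1$;\ $S(3,\cdot)=1,3,1$;\ $S(4,\cdot)=1,7,6,1$) gives $\sigma_1=b_nx$, $\sigma_2=b_nx+(b_nx)^2$, $\sigma_3=b_nx+3(b_nx)^2+(b_nx)^3$, and $\sigma_4=b_nx+7(b_nx)^2+6(b_nx)^3+(b_nx)^4$. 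Dividing each $\sigma_i$ by $b_n^{\,i}$ and collecting terms reproduces the stated expressions (a)--(e) exactly.

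The argument is entirely elementary and presents no genuine obstacle; the only place demanding care is part (e), where the fourth raw moment must be assembled from four factorial moments and, in particular, the coefficient $7$ on the $b_nx^2$ term tracked correctly. Should one prefer to avoid invoking Stirling numbers, an equivalent and perhaps more self-contained route is to compute the $\sigma_i$ recursively by applying the operator $\theta:=\lambda\frac{d}{d\lambda}$ to $e^{\lambda}=\sum_k \lambda^k/k!$ with $\lambda=b_nx$: since $\theta^i e^{\lambda}=\sum_k k^i\lambda^k/k!$, one has $\sigma_i=e^{-\lambda}\theta^i e^{\lambda}$, a polynomial in $\lambda$ produced by differentiating a polynomial times $e^{\lambda}$ finitely many times, which yields the same expressions.
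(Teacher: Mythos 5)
Your proposal is correct, and since the paper dismisses this lemma with ``by elementary calculations,'' your factorial-moment/Stirling-number computation is precisely the kind of elementary calculation the authors have in mind, only written out in full; all coefficients (including the $7$ in part (e)) check out. It is worth noting that your alternative route via the operator $\theta=\lambda\,\frac{d}{d\lambda}$ is essentially the same device the paper itself develops immediately afterward as the recursion $S_n(t^{r+1};x)=\frac{x}{b_n}S_n'(t^r;x)+xS_n(t^r;x)$ used to prove Lemma 2, so your argument aligns with the paper's methods in both of its variants.
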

\begin{proof}
By elementary calculations, the results can be obtained.
\end{proof}
Remarks: 
\begin{enumerate}
\item[1.] From lemma \ref{L3}, it can be seen that the operators (\ref{RBG1}) preserves linearity. 
\item[2.] We have 
\begin{eqnarray*}
s_{b_n, k}^{'}(x) &=& -b_n e^{-b_nx} \dfrac{(b_nx)^k}{k!}+ e^{-b_nx}b_n \dfrac{k(b_nx)^{k-1}}{k!} \\ &=& -b_n s_{b_n, k}(x)+ \dfrac{k}{x}s_{b_n, k}(x),  
\end{eqnarray*}
hence
\begin{eqnarray} \label{e21}
\dfrac{x}{b_n} s_{b_n, k}^{'}(x) &=& \left( \dfrac{k}{b_n} - x \right)s_{b_n, k}(x).  
\end{eqnarray}
\item[3.] We have for $r \in \mathbb{N}$,
\begin{eqnarray*}
 S_n^{'}(t^r;x) &=& \sum_{k=0}^{\infty} s_{b_n, k}^{'}(x) \left(\dfrac{k}{b_n}\right)^{r} \\ &=& \sum_{k=0}^{\infty} \dfrac{b_n}{x}\left( \dfrac{k}{b_n} - x \right)s_{b_n, k}(x) \left(\dfrac{k}{b_n}\right)^{r} \qquad \text{(using (\ref{e21}) } 
\end{eqnarray*}
On re-arranging terms, we get
\begin{equation} \label{e22}
S_n(t^{r+1};x) = \dfrac{x}{b_n} S_n^{'}(t^r;x) + xS_n(t^r;x)
\end{equation}

\end{enumerate}
\begin{lemma} \label{L4}
For $r \in \mathbb{N}$, the following relation holds:
\begin{equation} \label{e23}
S_n(t^r;x) = \sum_{j=1}^r a_{r,j}x^jb_n^{j-r} = x^r + \dfrac{r(r-1)}{2b_n}x^{r-1}+ \cdots + b_n^{1-r}x
\end{equation}
with positive coefficients $a_{r, j}$. In particular, $S_n(t^r;x)$ is a polynomial of degree $r$ without a constant term. 
\end{lemma}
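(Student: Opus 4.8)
The plan is to argue by induction on $r$, using the recurrence (\ref{e22}) as the engine that raises the power by one. The base case $r=1$ is just part (b) of Lemma \ref{L3}, namely $S_n(t;x)=x$, which is of the asserted form with the single coefficient $a_{1,1}=1>0$; it has degree $1$ and no constant term, so the statement holds.

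For the inductive step, suppose $S_n(t^r;x)=\sum_{j=1}^r a_{r,j}x^j b_n^{j-r}$ with every $a_{r,j}>0$. Differentiating in $x$ term by term gives $S_n^{'}(t^r;x)=\sum_{j=1}^r j\,a_{r,j}x^{j-1}b_n^{j-r}$, so that $\frac{x}{b_n}S_n^{'}(t^r;x)=\sum_{j=1}^r j\,a_{r,j}x^{j}b_n^{j-r-1}$ while $xS_n(t^r;x)=\sum_{j=1}^r a_{r,j}x^{j+1}b_n^{j-r}$. Substituting these into (\ref{e22}) and shifting the summation index in the second sum by one to align its powers of $x$ and $b_n$ with the target exponent $j-(r+1)$, I would collect the two contributions into a single expression $S_n(t^{r+1};x)=\sum_{j=1}^{r+1}a_{r+1,j}x^j b_n^{j-(r+1)}$ governed by the coefficient recurrence
\[
a_{r+1,j}=j\,a_{r,j}+a_{r,j-1},\qquad a_{r,0}:=0,\ \ a_{r,j}:=0\ (j>r).
\]

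From this recurrence all the structural claims follow at once. Positivity of each $a_{r+1,j}$ is immediate, since it is a sum of two nonnegative terms at least one of which is strictly positive by the inductive hypothesis, so positivity propagates. The index range $1\le j\le r+1$ shows there is no constant term and that the degree is exactly $r+1$, with leading coefficient $a_{r+1,r+1}=a_{r,r}=1$. The displayed lowest coefficient is $a_{r+1,1}=1\cdot a_{r,1}+a_{r,0}=a_{r,1}=1$, matching the term $b_n^{1-(r+1)}x$, and the one-line computation $a_{r+1,r}=r\,a_{r,r}+a_{r,r-1}=r+\frac{r(r-1)}{2}=\frac{(r+1)r}{2}$ reproduces the stated second coefficient $\frac{(r+1)((r+1)-1)}{2}$.

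The step I expect to require the most care is the reindexing and boundary bookkeeping when merging the two sums: one must check that the shifted second sum contributes precisely to the slots $j=2,\dots,r+1$ and the differentiated first sum to $j=1,\dots,r$, so that the conventions $a_{r,0}=0$ and $a_{r,r+1}=0$ make the single closed recurrence valid at both endpoints. Once the recurrence $a_{r+1,j}=j\,a_{r,j}+a_{r,j-1}$ is established, everything else is a routine consequence.
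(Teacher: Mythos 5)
Your proposal is correct and follows essentially the same route as the paper: induction on $r$ driven by the recurrence (\ref{e22}), leading to the same coefficient recursion $a_{r+1,j}=j\,a_{r,j}+a_{r,j-1}$, from which positivity, the degree, the absence of a constant term, and the values $a_{r+1,r+1}=a_{r+1,1}=1$, $a_{r+1,r}=r(r+1)/2$ all follow. The only cosmetic differences are that the paper anchors the induction at $r=1,2$ and telescopes the recursion for $a_{r+1,r}$, whereas you start at $r=1$ and evaluate $a_{r+1,r}$ directly from the inductive hypothesis; both are equally valid.
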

\begin{proof}
From (b)-(c) of lemma \ref{L3}, the representation (\ref{e23}) holds true for $r = 1, 2$ with $a_{1,1}=a_{2,2}=a_{2,1}=1$. We use the principle of mathematical induction and assume (\ref{e23}) to be true for some positive integer $r$. Now from (\ref{e22}),
\begin{eqnarray*}
S_n(t^{r+1};x) &=& \dfrac{x}{b_n} S_n^{'}(t^r;x) + xS_n(t^r;x) \\ &=& \dfrac{x}{b_n}\sum_{j=1}^{r}ja_{r,j}x^{j-1}b_n^{j-r}+ x\sum_{j=1}^{r}a_{r,j}x^{j}b_n^{j-r} \\ &=& a_{r,1}b_n^{-r}x + \sum_{j=2}^{r}\left(ja_{r,j}+ a_{r, j-1}\right) x^{j}b_n^{j-(r+1)}+a_{r,r}x^{r+1} \\ &=: &\sum_{j=1}^{r+1}a_{r+1,j}x^{j}b_n^{j-(r+1)},
\end{eqnarray*}
say. Thus the representation (\ref{e23}) is valid for all $r \in \mathbb{N}$ since
\begin{eqnarray*}
 && a_{r+1,r+1}=a_{r,r}= \cdots = a_{1,1}=1, \\ &&
 a_{r+1,1}=a_{r,1}= \cdots = a_{1,1}=1, \\ &&
 a_{r+1,r}=ra_{r,r} + a_{r,r-1} = r + a_{r,r-1}\\&& \qquad \quad = \cdots = r + (r-1)+ \cdots + 2 + a_{2,1} = r(r+1)/2.
\end{eqnarray*}
\end{proof}
Remark: In connection with the coefficients $a_{r, r-1}$, it can be derived that for $N \geq 2$,
\begin{equation}\label{e24}
a_{N+2, N+1}-2a_{N+1, N}+a_{N, N-1} = 1.
\end{equation}
Let us denote by $\mu_{n,m}$, the $m^{th}$ moments of the operators given by (\ref{RBG1}), defined as 
\begin{equation}\label{E5}
\mu_{n,m}(x)= S_n((t-x)^m;x),\qquad m = 0,1,2,\ldots. 
\end{equation}

\begin{lemma}\label{L5}
For the moments defined in ($\ref{E5}$), the following holds:
\begin{enumerate}
\item[(a)]$\mu_{n,1}(x) = S_n((t-x);x)=0 ,$
\item[(b)]$\mu_{n,2}(x)= S_n((t-x)^2;x)=\dfrac{x}{b_n},$
\item[(c)]$\mu_{n,3}(x)= S_n((t-x)^3;x)=\dfrac{x}{b_n^2},$
\item[(d)]$\mu_{n,4}(x)= S_n((t-x)^4;x)=\dfrac{3x^2}{b_n^2}+\dfrac{x}{b_n^3}.$
\end{enumerate}
\end{lemma}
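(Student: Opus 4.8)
The plan is to reduce everything to Lemma~\ref{L3} by exploiting the linearity of $S_n$ recorded in Remark~1 following that lemma. Since $x$ is held fixed while $S_n$ acts on the variable $t$, I would first expand the integrand binomially,
\begin{equation*}
(t-x)^m = \sum_{i=0}^{m} \binom{m}{i}(-x)^{m-i} t^i,
\end{equation*}
and then apply $S_n$ termwise to obtain
\begin{equation*}
\mu_{n,m}(x) = \sum_{i=0}^{m} \binom{m}{i}(-x)^{m-i} S_n(e_i;x).
\end{equation*}
At this stage each $S_n(e_i;x)$ is a known polynomial in $x$ with coefficients that are powers of $1/b_n$, supplied by parts (a)--(e) of Lemma~\ref{L3}, so the problem becomes purely algebraic: substitute these expressions and collect like powers of $x$.

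For part (a) this is immediate, since $S_n(e_1;x)=x$ and $S_n(e_0;x)=1$ give $\mu_{n,1}(x)=x-x=0$, reflecting that $S_n$ reproduces the identity. For the higher moments I would organize the computation by the power of $x$. The characteristic feature — and the only place where care is needed — is that the top-degree contributions must cancel: for $m=2,3,4$ the coefficient of $x^m$ collapses to $(1-1)^m=0$ by the binomial identity, and in the cubic and quartic cases one further intermediate power also cancels (the $x^2/b_n$ term for $m=3$ and the $x^3/b_n$ term for $m=4$). What survives is exactly the lower-order residue recorded in (b)--(d), namely $x/b_n$, $x/b_n^2$, and $3x^2/b_n^2 + x/b_n^3$.

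The main obstacle is therefore not conceptual but one of bookkeeping: one must track the several powers of $b_n$ appearing in $S_n(e_3;x)$ and $S_n(e_4;x)$ and verify that the alternating binomial weights annihilate every term of degree exceeding the expected order. I expect the quartic case to be the most error-prone step, where the five contributions from $S_n(e_0;x),\dots,S_n(e_4;x)$ must be combined and the coefficients $1,-4,6,-4,1$ checked to kill both the $x^4$ and the $x^3/b_n$ terms while leaving precisely $3x^2/b_n^2 + x/b_n^3$. Once this collection is carried out correctly for each $m$, the four identities follow directly.
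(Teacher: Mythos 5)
Your proposal is correct and follows exactly the paper's own route: the paper proves Lemma~\ref{L5} in one line by citing the linearity of $S_n$ together with Lemma~\ref{L3}, which is precisely the binomial-expansion-and-collect argument you carry out in detail. Your explicit bookkeeping (including the cancellations $1-4+6-4+1=0$ for $x^4$, $6-12+6=0$ for $x^3/b_n$, and the surviving $7-4=3$ coefficient in the quartic case) is accurate and simply makes the paper's terse proof explicit.
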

\begin{proof} 

The results follow from linearity of the operators $S_n$ and lemma $\ref{L3}$.
\end{proof}
Also, we mention some results related to the (modified) Steklov means. The (modified) Steklov means for $h > 0$ is defined by,
\begin{equation*}
f_h(x):= \left(\dfrac{2}{h}\right)^2 \int_0^{h/2} \int_0^{h/2} [2f(x+s+t) - f(x + 2(s+t))]ds dt.
\end{equation*}
We have
\begin{eqnarray*}
f(x) - f_h(x)&=& \left(\dfrac{2}{h}\right)^2 \int_0^{h/2} \int_0^{h/2} \Delta_{s+t}^2 f(x) ds dt, \\ f_h^{''}(x) &=& h^{-2}\left[8\Delta_{h/2}^2 f(x) - \Delta_{h}^2 f(x) \right],
\end{eqnarray*}
and hence
\begin{equation} \label{e3}
\Vert f - f_h \Vert_N \leq \omega_N^2(f, h), \qquad \Vert f_h^{''} \Vert_N \leq 9 h^{-2}\omega_N^2(f, h).
\end{equation}

\section{Local results}
\subsection{Direct result}
\noindent Consider the Banach lattice
\begin{equation*}
C_\gamma[0,\infty) = \{f \in C[0,\infty): \left|f(t) \right| \leq M (1+t)^{\gamma}\}
\end{equation*}
for some $M>0, \gamma>0$.

\begin{theorem}\label{T1}
$\lim_{n \rightarrow \infty} S_n(f;x)= f(x)$ uniformly for $x \in [0,a]$, provided $f \in C_\gamma [0,\infty)$, $\gamma \geq 2$ and $a > 0$.
\end{theorem}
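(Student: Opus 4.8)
The plan is to exploit the fact, recorded in Lemma \ref{L3}(a), that $S_n$ reproduces constants, together with the second-moment estimate $\mu_{n,2}(x)=x/b_n$ from Lemma \ref{L5}(b), and to run a Korovkin-type argument localized on $[0,a]$ but carried out directly on the defining series so that the polynomial growth of $f$ can be absorbed. Since $S_n(e_0;x)=1$, I would first write
\begin{equation*}
S_n(f;x)-f(x)=\sum_{k=0}^\infty s_{b_n,k}(x)\left[f\!\left(\tfrac{k}{b_n}\right)-f(x)\right],
\end{equation*}
so that the whole problem reduces to estimating $S_n(|f-f(x)|;x)$ uniformly in $x\in[0,a]$.

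Next, fix $\varepsilon>0$. Because $f$ is continuous, it is uniformly continuous on the compact interval $[0,a+1]$, so there is a $\delta\in(0,1]$ with $|f(t)-f(x)|<\varepsilon$ whenever $x\in[0,a]$ and $|t-x|<\delta$. I would split the series into the indices with $|k/b_n-x|<\delta$ (the near block) and those with $|k/b_n-x|\ge\delta$ (the far block). On the near block each summand is $<\varepsilon$ in absolute value, and summing the remaining basis functions gives a contribution bounded by $\varepsilon\,S_n(e_0;x)=\varepsilon$, uniformly in $x\in[0,a]$.

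The far block is where the real work lies, and it is the step I expect to be the main obstacle, since $f$ may grow like $(1+t)^\gamma$ while the interval is unbounded. Here I would apply the Cauchy--Schwarz inequality to the far sum, factoring $s_{b_n,k}(x)=s_{b_n,k}(x)^{1/2}\cdot s_{b_n,k}(x)^{1/2}$, to obtain the bound
\begin{equation*}
\Big(S_n\big(|f-f(x)|^2;x\big)\Big)^{1/2}\Big(\sum_{|k/b_n-x|\ge\delta}s_{b_n,k}(x)\Big)^{1/2}.
\end{equation*}
The second factor is controlled by Chebyshev's inequality applied to the second moment: $\sum_{|k/b_n-x|\ge\delta}s_{b_n,k}(x)\le\delta^{-2}\mu_{n,2}(x)=x/(b_n\delta^2)\le a/(b_n\delta^2)\to0$ uniformly on $[0,a]$. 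For the first factor I would use the growth hypothesis, writing $|f(t)-f(x)|^2\le C(1+t^{2\gamma})$ with $C$ depending only on $M,\gamma,a$, so that $S_n(|f-f(x)|^2;x)\le C\big(1+S_n(t^{2\gamma};x)\big)$; by Lemma \ref{L4} each integer-order moment $S_n(t^r;x)=\sum_{j=1}^r a_{r,j}x^j b_n^{j-r}$ is dominated on $[0,a]$ by $\sum_j a_{r,j}a^j$ (using $b_n\ge1$ and $j-r\le0$), hence stays bounded uniformly in $n$; for non-integer $\gamma$ one first dominates $t^{2\gamma}$ by $1+t^{\lceil 2\gamma\rceil}$. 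Thus the first factor is bounded and the far block tends to $0$ uniformly.

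Combining the two blocks gives $\limsup_{n}\sup_{x\in[0,a]}|S_n(f;x)-f(x)|\le\varepsilon$, and letting $\varepsilon\to0$ yields the claimed uniform convergence. The role of the hypothesis $\gamma\ge2$ is to guarantee that the quadratic test function $e_2$, whose moment $\mu_{n,2}$ drives the Chebyshev estimate, lies in $C_\gamma[0,\infty)$, so that the growth bound and the moment calculus of Lemmas \ref{L3}--\ref{L5} remain compatible throughout.
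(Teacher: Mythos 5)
Your proof is correct, but it takes a genuinely different route from the paper's. The paper disposes of Theorem \ref{T1} in a few lines by a soft argument: it composes $S_n$ with the restriction lattice homomorphism $T_a\colon C[0,\infty)\to C[0,a]$, notes from Lemma \ref{L3}(a)--(c) that $S_n(e_i;x)\to e_i(x)$ uniformly on $[0,a]$ for the test functions $e_0,e_1,e_2$, and then invokes the universal Korovkin-type property for positive linear operators (Theorem 4.1.4(vi) of Altomare--Campiti). All of the difficulties you correctly identify and wrestle with --- the evaluation points $k/b_n$ escaping the compact interval, the polynomial growth of $f$ on the unbounded domain --- are in the paper delegated to that cited abstract theorem. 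You instead reprove the Korovkin mechanism from first principles: reproduce constants, split at $|k/b_n-x|\ge\delta$, kill the near block by uniform continuity on $[0,a+1]$, and control the far block by Cauchy--Schwarz plus a Chebyshev estimate using $\mu_{n,2}(x)=x/b_n\le a/b_n$ (Lemma \ref{L5}(b)), absorbing the growth of $f$ via the uniform boundedness on $[0,a]$ of the moments $S_n(t^r;x)=\sum_{j=1}^{r}a_{r,j}x^j b_n^{j-r}$ from Lemma \ref{L4} (where $b_1\ge1$ and $b_n$ increasing give $b_n^{j-r}\le1$). Your version is longer but self-contained, needs no external Korovkin machinery, and in fact proves the conclusion for every $\gamma>0$; this last point shows that your closing remark slightly misreads the hypothesis --- nothing in your Chebyshev step requires $e_2\in C_\gamma$, and the condition $\gamma\ge2$ is rather tied to the paper's framework, where the Korovkin test set $\{e_0,e_1,e_2\}$ lies in $C_\gamma$ exactly when $\gamma\ge2$, and where $C_{\gamma'}\subseteq C_\gamma$ for $\gamma'\le\gamma$ makes the statement with large $\gamma$ the most inclusive one.
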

\begin{proof} 
For fix $a >0$, consider the lattice homomorphism $T_a: C[0, \infty) \rightarrow C[0,a]$ defined by $T_a(f):= \left.f\right|_{[0,a]}$ for every $f \in C[0, \infty)$, where $\left.f\right|_{[0,a]}$ denotes the restriction of the domain of $f$ to the interval $[0,a]$. In this case, we see that, for each $i = 0,1,2$ and by (a)-(c) of lemma \ref{L3},

\begin{equation}\label{E13}
\lim_{n \rightarrow \infty} T_a \left(S_n(e_i;x)\right)= T_a( e_i(x)), \; \text{uniformly on} \;[0,a].
\end{equation}

Thus, by using (\ref{E13}) and with the universal Korovkin-type property with respect to  positive linear operators (see Theorem 4.1.4 (vi) of \cite{Alt}, p.199) we have the result.
\end{proof}

Let us consider the space $C_B[0,\infty)$ of all continuous and bounded functions on $[0, \infty)$ and for $f \in C_B[0,\infty)$, consider the supremum norm $\|f\|=\sup\{|f(x)|: x \in [0,\infty)\} $. Also, consider the $K-$functional
\begin{equation}\label{E7}
K_2(f;\delta)=\inf_{g \in W^2}\left\lbrace \|f-g \|+\delta\|g^{''} \| \right\rbrace,
\end{equation}
where $\delta>0$ and $W^2=\left\lbrace g \in C_B[0,\infty): g^{'}, g^{''} \in C_B[0, \infty) \right\rbrace$. For a constant $C >0$, the following relationship exists:
\begin{equation}\label{E8}
K_2(f;\delta)\leq C \omega_2(f, \sqrt{\delta}),
\end{equation}
where
\begin{equation}\label{E9}
\omega_2(f, \sqrt{\delta})=\sup_{0<h<\sqrt{\delta}} \; \sup_{x \in [0, \infty)}|f(x+2h)-2f(x+h)+f(x)|
\end{equation}
is the second order modulus of smoothness of $f \in C_B[0,\infty)$; and for $f \in C_B[0,\infty)$, let the modulus of continuity be given by
\begin{equation}\label{E10}
\omega_1(f, \sqrt{\delta})=\sup_{0<h<\sqrt{\delta}} \; \sup_{x \in [0, \infty)}|f(x+h)-f(x)|.
\end{equation}

\begin{theorem}\label{T2}
For $f \in C_B[0, \infty)$, we have
\begin{equation*}
|S_n(f;x) - f(x)| \leq C\omega_2 \left(f,\sqrt{\mu_{n,2}(x)}\right),
\end{equation*}
where $C$ is a positive constant. 
\end{theorem}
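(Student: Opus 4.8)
The plan is to run the standard $K$-functional argument, exploiting that $S_n$ is a positive linear operator which reproduces constants. Since part (a) of Lemma \ref{L3} gives $S_n(e_0;x)=1$, positivity yields the contraction bound $|S_n(h;x)| \le \|h\|$ for every $h \in C_B[0,\infty)$. Fixing an arbitrary $g \in W^2$, I would first split by the triangle inequality,
\begin{equation*}
|S_n(f;x) - f(x)| \le |S_n(f-g;x)| + |(f-g)(x)| + |S_n(g;x) - g(x)| \le 2\|f-g\| + |S_n(g;x) - g(x)|,
\end{equation*}
so that everything reduces to estimating the error for the smooth function $g$.

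Next I would control $|S_n(g;x) - g(x)|$ through Taylor's formula with integral remainder, writing $g(t) = g(x) + (t-x)g'(x) + \int_x^t (t-u)g''(u)\,du$. Applying $S_n$ and using linearity, the constant term reproduces $g(x)$ because $S_n(e_0;x)=1$, while the linear term drops out since $\mu_{n,1}(x) = S_n(t-x;x) = 0$ by part (a) of Lemma \ref{L5}. For the remainder one has $\bigl|\int_x^t (t-u)g''(u)\,du\bigr| \le \tfrac{1}{2}\|g''\|(t-x)^2$, whence
\begin{equation*}
|S_n(g;x) - g(x)| \le \tfrac{1}{2}\|g''\|\, S_n\bigl((t-x)^2;x\bigr) = \tfrac{1}{2}\|g''\|\,\mu_{n,2}(x),
\end{equation*}
using part (b) of Lemma \ref{L5}, which moreover identifies $\mu_{n,2}(x) = x/b_n$ explicitly.

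Finally I would combine the two estimates into $|S_n(f;x) - f(x)| \le 2\bigl(\|f-g\| + \tfrac{1}{4}\mu_{n,2}(x)\|g''\|\bigr)$ and take the infimum over $g \in W^2$. Matching the definition (\ref{E7}) with $\delta = \tfrac{1}{4}\mu_{n,2}(x)$ gives $|S_n(f;x) - f(x)| \le 2K_2\bigl(f;\tfrac{1}{4}\mu_{n,2}(x)\bigr)$, and the relation (\ref{E8}) then converts this into a bound of the form $C'\,\omega_2\bigl(f,\tfrac{1}{2}\sqrt{\mu_{n,2}(x)}\bigr)$; absorbing the numerical factor into a single constant $C$ yields the claim. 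The argument is essentially routine once the moment identities are in hand, so I do not expect a genuine obstacle; the only point requiring care is the constant bookkeeping, namely choosing the scaling of $\delta$ correctly so that the coefficient of $\|g''\|$ in the split matches the $\delta\|g''\|$ in the $K$-functional, and absorbing the resulting factors of $2$ and $\tfrac12$ into $C$.
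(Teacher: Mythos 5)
Your proposal is correct and matches the paper's proof essentially step for step: the same Taylor expansion with integral remainder, the same use of $\mu_{n,1}(x)=0$ and $\mu_{n,2}(x)$ from Lemma \ref{L5}, the same split giving $2\Vert f-g\Vert + \tfrac{1}{2}\Vert g''\Vert\mu_{n,2}(x)$, the infimum yielding $2K_2\left(f;\tfrac{1}{4}\mu_{n,2}(x)\right)$, and the passage to $\omega_2$ via (\ref{E8}). The only cosmetic difference is in the final constant bookkeeping, where the paper invokes $\omega_2(f,\lambda\delta)\leq(\lambda+1)^2\omega_2(f,\delta)$ while your factor $\tfrac{1}{2}$ inside the modulus can even be absorbed by monotonicity alone; either way the claim follows.
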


\begin{proof}
For $g \in W^2$, $x \in [0, \infty)$ and by Taylor's expansion, we have
\begin{equation*}
g(t) = g(x) + (t-x)g^{'}(x)+\int_x^t(t-u)g^{''}(u)du.
\end{equation*}
Operating $S_n$ on both the sides, 
\begin{eqnarray*}
\left|S_n(g;x)-g(x)\right| &=&\left| S_n\left( \int_x^t(t-u)g^{''}(u)du;x\right)\right| \\ &\leq & \dfrac{1}{2}\Vert g^{''} \Vert S_n((t-x)^2;x) \\ & = & \dfrac{1}{2} \Vert g^{''} \Vert  \mu_{n,2}(x).
\end{eqnarray*}
Also, we have $\left|S_n(f;x)\right| \leq \Vert f \Vert$. Using these, we get
\begin{eqnarray*}
\left|S_n(f;x)- f(x)\right|& \leq & \left|S_n(f-g;x)- (f-g)(x)\right| + \left|S_n(g;x)- g(x)\right| \\ & \leq & 2 \Vert f-g \Vert + \dfrac{1}{2} \Vert g^{''} \Vert \mu_{n,2}(x).
\end{eqnarray*}
Taking infimum on the right hand side for all $g \in W^2$, we get
\begin{eqnarray*}
\left|S_n(f;x)- f(x)\right| & \leq & 2K_2\left(f,\dfrac{1}{4} \mu_{n,2}(x)\right).
\end{eqnarray*}
Using (\ref{E8}) and $\omega_2(f, \lambda \delta) \leq (\lambda + 1)^2 \omega_2(f, \delta)$ for $\lambda >0$, we get
\begin{eqnarray*}
\left|S_n(f;x)- f(x)\right| & \leq & C \omega_2\left(f, \sqrt{\mu_{n,2}(x)}\right),
\end{eqnarray*}
for some constant $C>0$.
\end{proof}

\subsection{A Voronovskaja-type result}
In this section we prove a Voronovskaja-type theorem for the operators $S_n$ given in (\ref{RBG1}).
\begin{lemma}\label{L6}
$\lim_{n \rightarrow \infty} b_n^2 \mu_{n,4}(x) = 3 x^2$ uniformly with respect to $x \in [0, a],\, a > 0$.
\end{lemma}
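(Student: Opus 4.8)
The plan is to bypass any fresh computation and instead read off the answer from the explicit fourth moment already recorded in part (d) of Lemma \ref{L5}, which states $\mu_{n,4}(x) = \frac{3x^2}{b_n^2} + \frac{x}{b_n^3}$. Since the quantity of interest is $b_n^2 \mu_{n,4}(x)$, the natural first step is simply to multiply this closed form through by $b_n^2$.

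Carrying out that multiplication gives
\begin{equation*}
b_n^2 \mu_{n,4}(x) = 3x^2 + \frac{x}{b_n},
\end{equation*}
so the target value $3x^2$ appears as the leading term and the entire discrepancy is the single remainder term $x/b_n$. Pointwise convergence to $3x^2$ is then immediate from the hypothesis $b_n \to \infty$ as $n \to \infty$.

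To upgrade pointwise convergence to uniform convergence on $[0,a]$, I would estimate the remainder by a bound independent of $x$. Specifically, for every $x \in [0,a]$ one has
\begin{equation*}
\left| b_n^2 \mu_{n,4}(x) - 3x^2 \right| = \frac{x}{b_n} \leq \frac{a}{b_n},
\end{equation*}
and since $a/b_n$ does not depend on $x$ and tends to $0$ as $n \to \infty$, the supremum over $x \in [0,a]$ of the left-hand side also tends to $0$. This yields the claimed uniform limit.

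There is no genuine obstacle here: the analytic content was already absorbed into the moment calculation of Lemma \ref{L5}, and what remains is purely algebraic. The only point requiring minor care is the uniformity claim, where one must exhibit an $x$-free majorant for the remainder (here $a/b_n$) rather than merely arguing pointwise, but this is routine given the monotone dependence of $x/b_n$ on $x$ over the compact interval $[0,a]$.
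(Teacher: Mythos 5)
Your proposal is correct and follows exactly the paper's route: the paper likewise deduces the lemma directly from Lemma \ref{L5}(d), merely noting the result is obvious from the closed form $\mu_{n,4}(x) = \frac{3x^2}{b_n^2} + \frac{x}{b_n^3}$. Your only addition is spelling out the uniform bound $\left|b_n^2\mu_{n,4}(x) - 3x^2\right| = \frac{x}{b_n} \leq \frac{a}{b_n}$ on $[0,a]$, which the paper leaves implicit.
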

\begin{proof}
The result is obvious from lemma \ref{L5}(d).
\end{proof}

\begin{theorem}\label{T3}
For every $f \in C_\gamma[0,\infty)$ such that $f^{'}, f^{''} \in C_\gamma[0,\infty), \gamma \geq 4$, we have 
\begin{equation*}
\lim_{n \rightarrow \infty} b_n \left[S_n(f;x)-f(x) \right]=\dfrac{x}{2}f^{''}(x)
\end{equation*}
with respect to $x \in [0, a] \, (a > 0)$.
\end{theorem}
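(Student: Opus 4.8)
The plan is to follow the classical Voronovskaja route: expand $f$ by Taylor's formula to second order, apply the operator $S_n$, and then exploit the exact moment formulas of Lemma \ref{L5} together with the fourth-moment asymptotics of Lemma \ref{L6}. First I would write, for fixed $x \in [0,a]$ and using the hypothesis $f', f'' \in C_\gamma[0,\infty)$,
$$f(t) = f(x) + (t-x)f'(x) + \frac{1}{2}(t-x)^2 f''(x) + (t-x)^2 h(t,x),$$
where $h(t,x) := \frac{f(t)-f(x)-(t-x)f'(x)-\frac{1}{2}(t-x)^2 f''(x)}{(t-x)^2}$ for $t \neq x$ and $h(x,x):=0$. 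By Taylor's theorem $h(\cdot, x)$ is continuous with $\lim_{t\to x}h(t,x)=0$, and the growth condition defining $C_\gamma$ yields a polynomial bound $|h(t,x)|\le M(1+t)^{\gamma}$ holding uniformly for $x\in[0,a]$.

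Next I would apply $S_n$, use its linearity, and invoke Lemma \ref{L5}(a)--(b):
$$b_n[S_n(f;x)-f(x)] = f'(x)\,b_n\mu_{n,1}(x) + \frac{1}{2}f''(x)\,b_n\mu_{n,2}(x) + b_n\,S_n\big((t-x)^2 h(t,x);x\big).$$
Since $\mu_{n,1}(x)=0$ and $b_n\mu_{n,2}(x)=x$, the first term vanishes and the second equals exactly $\frac{x}{2}f''(x)$. It therefore remains to show that the error term $E_n(x):=b_n\,S_n\big((t-x)^2 h(t,x);x\big)$ tends to $0$ uniformly on $[0,a]$.

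For this I would apply the Cauchy--Schwarz inequality for positive linear operators,
$$|E_n(x)| \le b_n\sqrt{S_n\big((t-x)^4;x\big)}\,\sqrt{S_n\big(h^2(t,x);x\big)} = \sqrt{b_n^2\mu_{n,4}(x)}\,\sqrt{S_n\big(h^2(t,x);x\big)}.$$
By Lemma \ref{L6} the first factor stays bounded on $[0,a]$, so the task reduces to proving $S_n(h^2(\cdot,x);x)\to 0$ uniformly in $x$; this is the main obstacle. Given $\varepsilon>0$ I would choose $\delta>0$ so that $|h(t,x)|<\varepsilon$ whenever $|t-x|<\delta$ (uniformly for $x\in[0,a]$, using uniform continuity on the compact set), and split $S_n(h^2;x)$ over the regions $\{|t-x|<\delta\}$, whose contribution is at most $\varepsilon^2$, and $\{|t-x|\ge\delta\}$. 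On the latter I would use the growth bound on $h$ and a second Cauchy--Schwarz step, estimating the far contribution by $\sqrt{S_n(h^4;x)}\,\sqrt{S_n(\mathbf 1_{\{|t-x|\ge\delta\}};x)}$; here $S_n(h^4;x)$ remains bounded on $[0,a]$ by the polynomial representation of Lemma \ref{L4}, while the probability factor is controlled through Chebyshev's inequality via $S_n(\mathbf 1_{\{|t-x|\ge\delta\}};x)\le \mu_{n,2}(x)/\delta^2 = x/(b_n\delta^2)\to 0$. Letting $n\to\infty$ and then $\varepsilon\to 0$ gives $S_n(h^2;x)\to 0$ uniformly, hence $E_n(x)\to 0$ uniformly on $[0,a]$, which completes the argument.
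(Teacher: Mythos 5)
Your proposal is correct, and up to the first application of Cauchy--Schwarz it coincides with the paper's proof: the same Peano-remainder function (the paper's $\Psi(t,x)$ is your $h(t,x)$), the same use of Lemma \ref{L5}(a)--(b) to reduce everything to the term $\frac{x}{2}f''(x)$ plus the error $E_n(x)=b_n S_n((t-x)^2 h(t,x);x)$, and the same bound $|E_n(x)|\leq \left(b_n^2\mu_{n,4}(x)\right)^{1/2}\left(S_n(h^2(t,x);x)\right)^{1/2}$ with Lemma \ref{L6} controlling the first factor. You diverge at the last step, showing $S_n(h^2(\cdot,x);x)\to 0$: the paper simply notes $\Psi^2(\cdot,x)\in C_\gamma[0,\infty)$ and invokes the Korovkin-type Theorem \ref{T1}, whereas you prove the statement by hand via an $\varepsilon$--$\delta$ split at $\{|t-x|\geq\delta\}$, a second Cauchy--Schwarz on the far part, boundedness of $S_n(h^4;\cdot)$ on $[0,a]$ through the polynomial moment representation of Lemma \ref{L4} (note $b_n\geq 1$ makes $S_n(t^r;x)$ uniformly bounded in $n$ there, and a non-integer exponent $4\gamma$ is handled by passing to $\lceil 4\gamma\rceil$), and the Chebyshev estimate $S_n(\mathbf{1}_{\{|t-x|\geq\delta\}};x)\leq \mu_{n,2}(x)/\delta^2 = x/(b_n\delta^2)\leq a/(b_n\delta^2)$. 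What each route buys: the paper's appeal to Theorem \ref{T1} is shorter and reuses established machinery, but it is slightly delicate, since Theorem \ref{T1} concerns a \emph{fixed} function of $t$ while $\Psi^2(\cdot,x)$ varies with the very point $x$ at which convergence is evaluated, so extracting uniformity on $[0,a]$ strictly requires an argument of the kind you make explicit (uniform smallness of $h$ near the diagonal from uniform continuity of $f''$ on a compact neighborhood, plus a uniform tail estimate). Your version is longer but self-contained and delivers the uniform statement watertightly; the only point worth spelling out in a final write-up is the claimed uniform polynomial bound $|h(t,x)|\leq M(1+t)^{\gamma}$ for $x\in[0,a]$, which indeed holds by combining the Lagrange form of the remainder for $|t-x|\leq 1$ with the trivial estimate $|h(t,x)|\leq (1+t)^{\gamma}\cdot\text{const}$ for $|t-x|\geq 1$ coming from $f,f',f''\in C_\gamma$ and $\gamma\geq 4$.
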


\begin{proof}
Let $f,f^{'}, f^{''} \in C_\gamma[0,\infty)$ and $x \geq 0$. Define
\begin{equation*}
\Psi(t,x)= \dfrac{f(t)-f(x)-(t-x)f^{'}(x)-\dfrac{1}{2}(t-x)^2 f^{''}(x)}{(t-x)^2}, \quad \text{if} \, t \neq x, 
\end{equation*}
and $\Psi(x,x)=0$. Then the function $\Psi(\cdot,x) \in C_\gamma[0,\infty)$. Hence, by Taylor's theorem we get
\begin{equation*}
f(t)= f(x)+(t-x)f^{'}(x)+\dfrac{1}{2}(t-x)^2 f^{''}(x)+(t-x)^2\Psi(t,x). 
\end{equation*}
Now from lemma(\ref{L5})(a)-(b)
\begin{eqnarray}\label{E12}
b_n \left[S_n(f;x)-f(x)\right]& = & \dfrac{1}{2} b_nf^{''}(x)\mu_{n,2}(x) + b_n S_{n}((t-x)^2 \Psi(t,x)).
\end{eqnarray}

If we apply the Cauchy-Schwarz inequality to the second term on the right hand side of (\ref{E12}), then 
\begin{equation*}
b_n S_{n}((t-x)^2 \Psi(t,x);x) \leq \left( b_n^2 \mu_{n,4}(x)\right)^{\frac{1}{2}} ( S_{n}(\Psi^2(t,x);x))^{\frac{1}{2}}
\end{equation*}
Now $\Psi^2(\cdot,x) \in C_\gamma[0,\infty)$, using theorem \ref{T1}, we have $S_{n}(\Psi^2(t,x);x) \rightarrow \Psi^2(x,x)=0$, as $n \rightarrow \infty$ and using lemma \ref{L6}, this third term on the right tends to zero for $x \in [0, a]$ and we get
\begin{equation*}
\lim_{n \rightarrow \infty} b_n \left[S_n(f;x)-f(x)\right]= \dfrac{1}{2} xf^{''}(x).
\end{equation*}
for $x \in [0, a], (a>0)$.
\end{proof}

\section{Polynomial weight spaces}
The results discussed in Section $1$, are of a local character dealing with compact sub-intervals of $[0, \infty)$. To derive global results for continuous functions on unbounded interval, one has to consider the spaces other than discussed earlier (refer \cite{Gad}). One such space is the polynomial weight space, where one can established global results for the operators (\ref{RBG1}) as well as a characterization for the same. So, in this section, we are discussing a polynomial weight space $C_N$ as given below and discuss direct and inverse results for the non-optimal cases $0 < \alpha < 2$, as well as for the saturation case $\alpha = 2$. \\
Consider the space $C_N$ defined using weight $w_N$, $N \in \mathbb{N}$ as follows:
\begin{eqnarray*}
&& w_0(x) = 1, \qquad w_N(x) = (1 + x^N)^{-1} \qquad (x \geq 0, N \in \mathbb{N}), \\ \\ &&
C_N = \left\lbrace f \in C[0, \infty): w_Nf \; \text{uniformly continous and bounded on} \; [0, \infty) \right\rbrace, \\ \\ &&
\Vert f \Vert_N = \sup_{x \geq 0} w_N(x)\left|f(x) \right|.
\end{eqnarray*}
The corresponding Leibschitz classes are given for $0 < \alpha \leq 2$ by $(h > 0)$
\begin{eqnarray*}
&& \Delta^2_h f(X) = f(x+2h) - 2f(x+h) + f(x), \\ \\ &&
\omega_N^2(f, \delta) = \sup_{0 < h \leq \delta} \Vert \Delta_h^2 f \Vert_N, \\ \\ &&
Lip_N^2 \alpha = \left\lbrace f \in C_N: \omega_N^2(f, \delta) =  O\left(\delta^{\alpha}\right), \delta \rightarrow 0^{+} \right\rbrace.
\end{eqnarray*}
We have the following characterization using the operators defined by (\ref{RBG1}):
\begin{theorem}\label{T4}
Let $f \in C_N, N \in \mathbb{N}$,  $\alpha \in (0, 2]$, then for the operators given by (\ref{RBG1}),
\begin{equation} \label{glo1}
w_N(x)\left| S_n(f;x) - f(x) \right| \leq M_N \left[ \dfrac{x}{b_n} \right]^{\alpha/2} \qquad (N \in \mathbb{N}, x \geq 0)
\end{equation}
is equivalent to $f \in Lip_N^2 \alpha$, where $M_N$ is a constant independent of $b_n$ and $x$.
\end{theorem}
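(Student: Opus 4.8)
The statement is an equivalence between a quantitative rate of convergence (the direct estimate (\ref{glo1})) and membership in the Lipschitz class $Lip_N^2\alpha$. My plan is to prove the two implications separately, treating the direct implication ($f \in Lip_N^2\alpha \Rightarrow$ (\ref{glo1})) first, as it is the more routine direction, and then the inverse (saturation/converse) implication, which I expect to be the genuine obstacle.

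\textbf{Direct implication.} Assume $f \in Lip_N^2\alpha$, so $\omega_N^2(f,\delta) = O(\delta^\alpha)$. The plan is to run a $K$-functional argument adapted to the weighted norm $\Vert \cdot \Vert_N$, mirroring the unweighted Theorem \ref{T2} but now in the polynomial weight space. First I would introduce the weighted $K$-functional $K(f,\delta)_N = \inf_g\{\Vert f - g\Vert_N + \delta\Vert g''\Vert_N\}$ over smooth $g$, which is equivalent to $\omega_N^2(f,\sqrt{\delta})$ up to constants by the Steklov-mean estimates (\ref{e3}) already recorded in the excerpt. For a fixed smooth $g$, apply Taylor's expansion with integral remainder and operate by $S_n$; the key computation is to bound $w_N(x)\,|S_n(g;x)-g(x)|$ in terms of $\Vert g''\Vert_N$ and a moment. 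Here the crucial estimate is controlling $w_N(x)\,S_n\big((t-x)^2\,w_N(t)^{-1};x\big)$, i.e. showing $S_n$ maps $C_N$ into $C_N$ with the weight behaving well; combined with $\mu_{n,2}(x) = x/b_n$ from Lemma \ref{L5}(b), this produces the factor $[x/b_n]$. Optimizing over $g$ converts $K(f, x/b_n)_N$ into $\omega_N^2(f,\sqrt{x/b_n})_N = O((x/b_n)^{\alpha/2})$, which is exactly (\ref{glo1}).

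\textbf{Inverse implication.} Assume the rate (\ref{glo1}) holds; I must deduce $f \in Lip_N^2\alpha$. This is the hard part. The standard route is a Berens--Lorentz type argument: one first establishes a \emph{smoothing} inequality showing that $b_n\,\Vert S_n f - f\Vert_N$ (or a suitable localized version) controls $\Vert (S_n f)''\Vert_N$, and conversely that the second derivative of the iterated/smoothed operator is governed by the assumed rate. Concretely I would (i) prove a weighted Bernstein-type inequality $\Vert (S_n f)''\Vert_N \le C\, b_n\,\Vert f\Vert_N$ for the operators (\ref{RBG1}), using the identities (\ref{e21})--(\ref{e22}) for the derivatives of the basis functions together with the moment growth from Lemma \ref{L4}; and (ii) prove the Voronovskaja-consistent estimate $\Vert S_n f - f\Vert_N \le C\,b_n^{-1}\Vert f''\Vert_N$ for smooth $f$, which again follows from the Taylor argument and $\mu_{n,2}=x/b_n$. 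Feeding these two inequalities into the $K$-functional, one obtains $K(f, t)_N \le C\,t^{\alpha/2}$ for the relevant range of $t$ via a discrete Hardy-type / iteration lemma, and the equivalence of $K$ with $\omega_N^2$ then yields $f \in Lip_N^2\alpha$.

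\textbf{Main obstacle.} The genuine difficulty lies entirely in the inverse direction, and within it in two interlocking technical points: establishing the weighted Bernstein inequality $\Vert (S_n f)''\Vert_N \le C\,b_n\,\Vert f\Vert_N$ with a constant uniform in $n$, and carrying out the Berens--Lorentz iteration in the presence of the weight $w_N$ and the $x$-dependent (rather than constant) rate $[x/b_n]^{\alpha/2}$. The $x$-dependence means the estimate (\ref{glo1}) degenerates near $x=0$ and grows with $x$, so the localization must be handled carefully; I expect this to require splitting the argument on a neighbourhood of the origin versus the unbounded tail and exploiting the exact moment formulas in Lemma \ref{L5} to keep all weighted quantities finite. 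The saturation case $\alpha = 2$ is the delicate endpoint where the iteration lemma is tightest and where the converse must be argued with the most care.
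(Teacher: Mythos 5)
Your skeleton is right for the direct part and for the inverse in the non-optimal range: the paper likewise proves the direct estimate via Steklov means and a Taylor-type lemma (Lemmas \ref{L7}, \ref{L9}, Theorem \ref{T5}), and the inverse for $0<\alpha<2$ via a Bernstein-type inequality plus the Berens--Lorentz reduction $\omega_N^2(f,h) \leq M_N[\delta^{\alpha} + (h/\delta)^2\omega_N^2(f,\delta)]$ (Lemmas \ref{L10}, \ref{L11}, Theorem \ref{T6}). But the Bernstein inequality you commit to, $\Vert (S_nf)''\Vert_N \leq C\,b_n\Vert f\Vert_N$, is false as stated: from the representation $S_n''(f;x) = b_n^2\sum_{k}\Delta_{1/b_n}^2 f\left(k/b_n\right)s_{b_n,k}(x)$ one gets $S_n''(f;0) = b_n^2\,\Delta_{1/b_n}^2f(0)$, which is of size $b_n^2\Vert f\Vert_N$ in general, so no uniform $O(b_n)$ bound can hold near $x=0$. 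What the argument actually needs, and what the paper proves in Lemma \ref{L10}, is the localized inequality $w_N(x)\left|S_n''(f;x)\right| \leq M_N\,\omega_N^2(f,\delta)\left[b_n/x + \delta^{-2}\right]$, whose factor $b_n/x$ is exactly calibrated to the $x$-dependent hypothesis $[x/b_n]^{\alpha/2}$; it is then combined with the estimate $\int_0^h\int_0^h (x+s+t)^{-1}\,ds\,dt \leq 6h^2/(x+2h)$ (Lemma \ref{L11}) and the choice of $n$ with $b_n \approx (x+2h)/\delta^2$. You flag the degeneracy at $x=0$ as an obstacle, but your concrete step (i) does not survive it, and note also that the right-hand side involves $\omega_N^2(f,\delta)$, not $\Vert f\Vert_N$, which is what makes the iteration close.

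The more serious gap is the endpoint $\alpha=2$, which the theorem includes. The Berens--Lorentz iteration cannot be pushed ``with care'' through saturation: from $\omega_N^2(f,h)\leq M_N[\delta^{2} + (h/\delta)^2\omega_N^2(f,\delta)]$ one only extracts $\omega_N^2(f,h)=O(h^2\log(1/h))$, not $O(h^2)$ --- this logarithmic loss is precisely why $\alpha=2$ is the saturation case. The paper's Theorem \ref{T7} abandons the iteration entirely and argues by convexity: since $S_n(t^2;x)-x^2 = x/b_n$ exactly (Lemma \ref{L5}(b)), the hypothesis $w_N(x)\left|S_n(f;x)-f(x)\right| \leq M_N\,x/b_n$ together with the moment inequality $S_n(t^{N+2};x) \geq x^N S_n(t^2;x)$ yields $\dfrac{M_Nx^2}{w_N(x)} \pm f(x) \leq S_n\bigl(M_Nt^2/w_N(t) \pm f(t);x\bigr)$ for all $n$ and $x$; by the characterization that $g \leq S_n g$ for all $n$ forces $g$ convex, the functions $M_Nt^2/w_N(t)\pm f$ are convex, and then $w_N(x)\left|\Delta_h^2 f(x)\right| \leq M_N\,w_N(x)\,\Delta_h^2\left[t^2/w_N(t)\right](x) \leq M_N h^2$ gives $f \in Lip_N^2\,2$ directly. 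Without this idea (or a substitute, e.g. a genuine saturation theorem), your proposal establishes the equivalence only for $0<\alpha<2$.
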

\noindent Remark: The condition (\ref{glo1}) exhibit local structure. As the constant $M_N$ is independent of $b_n, x$, we may write (\ref{glo1}) as 
\begin{equation} \label{glo2}
\Vert x^{-\alpha/2} \left[ S_n(f;x) - f(x) \right] \Vert_N = O\left(b_n^{-\alpha/2} \right).
\end{equation}
which reveals the global equivalence assertion and this is due to incorporation of weights $w_N$ into the approximation (\ref{glo1}) as well as into the definition of $Lip_N^2 \alpha$.
\begin{lemma}\label{L7}
For each $N \in \mathbb{N}\cup \left\lbrace 0 \right\rbrace$ there is a constant $M_N$ such that uniformly for $x \geq 0$, $n \in \mathbb{N}$ 
\begin{equation}\label{glo3}
w_N(x)S_n(1/w_N(t);x)\leq M_N.
\end{equation}
In particular, for any $f \in C_N$,
\begin{equation}\label{glo4}
\Vert S_n(f; \cdot) \Vert_N \leq M_N \Vert f \Vert_N.
\end{equation}
\end{lemma}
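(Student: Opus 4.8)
The plan is to reduce both inequalities to the moment formula already recorded and to the positivity of $S_n$. Since $1/w_N(t) = 1 + t^N$, linearity together with Lemma~\ref{L3}(a) gives $S_n(1/w_N(t);x) = 1 + S_n(t^N;x)$, so the first claim (\ref{glo3}) is really an estimate on the single moment $S_n(e_N;x)$.

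First I would dispose of $N=0$, where $w_0 \equiv 1$ and $S_n(1;x)=1$, so (\ref{glo3}) holds with $M_0=1$. For $N \geq 1$ I would invoke Lemma~\ref{L4} to write $S_n(t^N;x) = \sum_{j=1}^N a_{N,j}x^j b_n^{j-N}$ with positive coefficients and $a_{N,N}=1$. The decisive point is that the hypothesis $b_1 \geq 1$ (with $(b_n)$ increasing) forces $b_n \geq 1$, so that $b_n^{j-N} \leq 1$ for every $j \leq N$; hence
\[
S_n\!\left(\tfrac{1}{w_N(t)};x\right) = 1 + \sum_{j=1}^N a_{N,j}x^j b_n^{j-N} \leq 1 + \sum_{j=1}^N a_{N,j}x^j,
\]
a bound that no longer depends on $n$. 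Multiplying by $w_N(x) = (1+x^N)^{-1}$, it then suffices to bound the rational function
\[
g_N(x) := \frac{1 + \sum_{j=1}^N a_{N,j}x^j}{1+x^N}
\]
uniformly on $[0,\infty)$. This is immediate: $g_N$ is continuous and nonnegative with $g_N(0)=1$, and since numerator and denominator share the same leading term $x^N$ (because $a_{N,N}=1$), $g_N(x) \to 1$ as $x \to \infty$. A continuous function on $[0,\infty)$ possessing a finite limit at infinity is bounded, so $M_N := \sup_{x\geq 0} g_N(x)$ is finite and yields (\ref{glo3}) uniformly in $n$.

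For the second assertion (\ref{glo4}) I would use that $S_n$ is a positive linear operator, its kernel $s_{b_n,k}(x)$ being nonnegative. For $f \in C_N$ the definition of $\Vert\cdot\Vert_N$ gives $|f(t)| \leq \Vert f\Vert_N/w_N(t)$, so positivity yields $|S_n(f;x)| \leq S_n(|f|;x) \leq \Vert f\Vert_N\, S_n(1/w_N(t);x)$. Multiplying by $w_N(x)$, applying (\ref{glo3}), and taking the supremum over $x \geq 0$ gives $\Vert S_n(f;\cdot)\Vert_N \leq M_N\Vert f\Vert_N$. There is no genuine obstacle in this argument; the only step demanding care is securing uniformity in $n$, and that is precisely what the condition $b_n \geq 1$ provides through the inequality $b_n^{j-N} \leq 1$.
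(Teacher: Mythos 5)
Your proposal is correct and follows essentially the same route as the paper: both rest on the moment representation of Lemma~\ref{L4} with positive coefficients and $a_{N,N}=1$, use $b_n \geq 1$ to make the bound uniform in $n$, bound the resulting rational function in $x$ by its behavior at $0$ and at infinity, and then deduce (\ref{glo4}) from (\ref{glo3}) via positivity of $S_n$ and the pointwise bound $|f(t)| \leq \Vert f \Vert_N / w_N(t)$. The only cosmetic difference is that the paper keeps the leading term $x^N$ separate and bounds the remainder $\frac{1}{b_n}\sum_{j=1}^{N-1}a_{N,j}x^j b_n^{j-(N-1)}/(1+x^N)$, whereas you absorb the whole sum at once --- the substance is identical.
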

\begin{proof}
For $N=0$, $w_0(x)S_n(1/w_0(t);x)= 1\leq M_N$, for any constant $M_N \geq 1$. Now for $N \in \mathbb{N}$,
\begin{eqnarray*}
w_N(x)S_n(1/w_N(t);x)&=& w_N(x)\left[S_n(1;x)+S_n(t^N;x)\right]
 \\ &=& \left(1+x^N\right)^{-1} \left[1+x^N+\sum_{j=1}^{N-1}a_{N,j}x^jb_n^{j-N} \right] \quad (\text{by lemma \ref{L4}})\\ &=& 1+ \dfrac{1}{b_n}\sum_{j=1}^{N-1}a_{N,j}x^jb_n^{j-(N-1)}/\left(1+x^N\right)\leq M_N
\end{eqnarray*}
as the coefficients $a_N,j$ only depend on $N$ and the sum is bounded with respect to $x, b_n$.\\ Now, since $f \in C_N$
\begin{eqnarray*}
w_N(x)\left|S_n(f(t);x)\right| &\leq & w_N(x) \sum_{k=0}^{\infty} w_N\left(\dfrac{k}{b_n}\right)f\left(\dfrac{k}{b_n}\right)\left[w_N\left(\dfrac{k}{b_n}\right)\right]^{-1}s_{b_n,k} \\ & \leq & \Vert f \Vert_N w_N(x) S_n(1/w_N(t);x)\leq M_N \Vert f \Vert_N.
\end{eqnarray*}
Taking supremum on right hand side over all $x$, we get the result.
\end{proof}
\begin{lemma}\label{L8}
For each $N \in \mathbb{N}\cup \left\lbrace 0 \right\rbrace$ there is a constant $M_N$ such that for all $x \geq 0$, $n \in \mathbb{N}$
\begin{equation}\label{glo5}
w_N(x) S_n((t-x)^2/w_N(t);x) \leq M_Nx/b_n.
\end{equation}
Furthermore, one has 
\begin{equation}\label{glo6}
\dfrac{1}{b_n} w_N(x) S_n(t/w_N(t);x) \leq M_Nx/b_n.
\end{equation}
\end{lemma}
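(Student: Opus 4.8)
The plan is to exploit the fact that $1/w_N(t) = 1 + t^N$, which turns both estimates into assertions about moments that are already accessible through Lemmas \ref{L4} and \ref{L5}. For (\ref{glo5}) I would split
\[
S_n\!\left((t-x)^2/w_N(t);x\right) = S_n\!\left((t-x)^2;x\right) + S_n\!\left((t-x)^2 t^N;x\right),
\]
and observe that the first summand equals $x/b_n$ by Lemma \ref{L5}(b); since $w_N(x)\le 1$, it contributes at most $x/b_n$ to the left-hand side. The case $N=0$ is then immediate (there $w_0\equiv 1$ and the second summand is absent), so I would fix $N\in\mathbb{N}$ and focus entirely on bounding $w_N(x)\,S_n((t-x)^2 t^N;x)$.

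Next I would expand $(t-x)^2 t^N = t^{N+2} - 2x\,t^{N+1} + x^2 t^N$, substitute the representation of Lemma \ref{L4}, and collect the coefficient of each power $x^m$. The key structural point is that all three contributions to $x^m$ carry the \emph{same} power of $b_n$, giving
\[
S_n\!\left((t-x)^2 t^N;x\right) = \sum_{m=1}^{N+2}\bigl(a_{N+2,m} - 2a_{N+1,m-1} + a_{N,m-2}\bigr)\, x^m\, b_n^{\,m-N-2},
\]
with the convention $a_{r,j}=0$ for $j\notin\{1,\dots,r\}$. The $m=N+2$ bracket vanishes since $a_{r,r}=1$ forces $1-2+1=0$, and the $m=N+1$ bracket equals $1$ by the second-difference identity (\ref{e24}). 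Hence the surviving leading term is exactly $x^{N+1}/b_n$, and every remaining term has $m\le N+1$, so it carries a factor $b_n^{\,m-N-2}$ whose exponent is at most $-1$.

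The main obstacle I anticipate is precisely this double cancellation and the pinning of the $x^{N+1}$ coefficient to $1/b_n$: the $x^{N+2}$ cancellation is automatic, but the $x^{N+1}$ coefficient rests on (\ref{e24}), which is stated only for $N\ge 2$; I would therefore verify the boundary cases $N=1$ (and reconfirm $N=0$) directly from Lemma \ref{L3}. Once it is established that the surviving polynomial has degree $N+1$ in $x$ with every coefficient carrying at least one factor $b_n^{-1}$, the weighting step is routine: multiplying by $w_N(x)=(1+x^N)^{-1}$ and using $b_n^{\,m-N-2}\le b_n^{-1}$ (as $b_n\ge 1$) together with the elementary bound $x^{m-1}/(1+x^N)\le 1$ for $0\le m-1\le N$ gives $w_N(x)\,x^m b_n^{\,m-N-2}\le x/b_n$ for each $m$. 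Summing the finitely many terms yields $w_N(x)\,S_n((t-x)^2 t^N;x)\le M_N' x/b_n$, and combining with the first summand produces (\ref{glo5}).

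For (\ref{glo6}) the same mechanism applies more simply. Writing $t/w_N(t)=t+t^{N+1}$ and using Lemmas \ref{L3}(b) and \ref{L4},
\[
S_n\!\left(t/w_N(t);x\right) = x + \sum_{j=1}^{N+1} a_{N+1,j}\, x^j\, b_n^{\,j-N-1}.
\]
Multiplying by $w_N(x)$ and using $x^{N+1}/(1+x^N)\le x$, together with $x^{j}/(1+x^N)\le x$ and $b_n^{\,j-N-1}\le 1$ for $1\le j\le N+1$, shows $w_N(x)\,S_n(t/w_N(t);x)\le M_N x$; dividing by $b_n$ gives (\ref{glo6}). Here no cancellation is needed, so this half is expected to be entirely routine once the moment expansion is in hand.
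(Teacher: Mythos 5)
Your proposal is correct and follows essentially the same route as the paper: the same split $1/w_N(t)=1+t^N$, the same expansion of $S_n((t-x)^2t^N;x)$ via Lemma \ref{L4} with the leading cancellation from $a_{r,r}=1$ and the $x^{N+1}$ coefficient pinned by (\ref{e24}), the same separate treatment of the small-$N$ cases where (\ref{e24}) does not apply, and the same routine estimate for (\ref{glo6}). Your explicit second-difference bookkeeping $\bigl(a_{N+2,m}-2a_{N+1,m-1}+a_{N,m-2}\bigr)$ merely spells out what the paper abbreviates as $\left[x^N+\cdots+b_n^{-N}\right]x/b_n$.
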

\begin{proof}
For $N=0$, by (c) of lemma \ref{L5}, $w_0(x) S_n((t-x)^2/w_0(t);x)= S_n((t-x)^2;x)= x/b_n \leq M_N x/b_n$ for a constant $M_N \geq 1$. \\ For $N=1$, by (c)-(d) of lemma \ref{L5},
\begin{eqnarray*}
w_1(x) S_n((t-x)^2/w_1(t);x) &=& w_1(x)\left[S_n((t-x)^2;x)+ S_n((t-x)^3;x)\right. \\ && \qquad \left. +x S_n((t-x)^2;x)\right] \\ &=& \dfrac{x}{b_n}\left[1 + \dfrac{1}{b_n(1+x)} \right] \leq M_N x/b_n
\end{eqnarray*}
for a constant $M_N \geq 2$. \\ For $N \geq 2$, lemma \ref{L4} and (\ref{e24}) imply
\begin{eqnarray*}
S_n((t-x)^2t^N;x) &=& S_n(t^{N+2};x)-2x S_n(t^{N+1};x)+x^2 S_n(t^{N};x)\\ &=& x^{N+2} + a_{N+2,N+1}x^{N+1}/b_n+ \cdots + xb_n^{-N-1} \\ && \quad -2x \left[ x^{N+1} + a_{N+1,N}x^{N}/b_n+ \cdots + xb_n^{-N}\right] \\ && \quad +x^2 \left[ x^{N} + a_{N,N-1}x^{N-1}/b_n+ \cdots + xb_n^{-N+1} \right] \\ &=& \left(a_{N+2,N+1}-2a_{N+1,N}+a_{N,N-1} \right)x^{N+1}/b_n + \cdots xb_n^{-N-1} \\ &=& \left[x^N + \cdots + b_n^{-N} \right]\dfrac{x}{b_n}.
\end{eqnarray*}
Hence
\begin{eqnarray*}
w_N(x)S_n((t-x)^2/w_N(t);x) &=& w_N(x) \left[1+ x^N + \cdots + b_n^{-N} \right]\dfrac{x}{b_n} \leq M_N \dfrac{x}{b_n}.
\end{eqnarray*}
This proves (\ref{glo5}). Also
\begin{eqnarray*}
S_n(t/w_N(t);x) &=& S_n(t;x)+ S_n(t^{N+1};x) \\ &=&  x+ x^{N+1} + \cdots +x b_n^{-N}\\ &=&  \left[1+ x^{N} + \cdots + b_n^{-N} \right]x .
\end{eqnarray*}
Therefore, 
\begin{eqnarray*}
\dfrac{1}{b_n}w_N(x) S_n(t/w_N(t);x) &=& w_N(x) \left[1+ x^{N} + \cdots + b_n^{-N} \right]\dfrac{x}{b_n} \leq M_N \dfrac{x}{b_n}.
\end{eqnarray*}
\end{proof}
\subsection{Direct result}
\begin{lemma}\label{L9}
Let $N \in \mathbb{N}\cup \left\lbrace 0 \right\rbrace$, $g \in C_N^2 := \left\lbrace f \in C_N : f^{'}, f^{''} \in C_N \right\rbrace$. Then there exists a constant $M_N$ such that for all $n \in \mathbb{N}$, $x \geq 0$
\begin{equation}\label{glo7}
w_N(x)\left| S_n(g;x) - g(x) \right| \leq M_N \Vert g^{''} \Vert_N \dfrac{x}{b_n}.
\end{equation}
\end{lemma}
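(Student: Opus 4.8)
The plan is to mimic the Taylor-expansion argument of Theorem \ref{T2}, but to carry the polynomial weight $w_N$ through every step and to lean on the weighted second-moment estimate (\ref{glo5}) of Lemma \ref{L8} at the end. First I would write Taylor's formula with integral remainder for $g \in C_N^2$,
\[ g(t) = g(x) + (t-x)g'(x) + \int_x^t (t-u)g''(u)\,du, \]
apply $S_n$, and use $S_n(e_0;x)=1$ together with $\mu_{n,1}(x)=0$ (Lemma \ref{L5}(a)) to cancel the constant and linear terms, so that
\[ w_N(x)\left| S_n(g;x)-g(x) \right| \le w_N(x)\, S_n\!\left( \left| \int_x^t (t-u)g''(u)\,du \right| ; x \right). \]

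The key step is to estimate the remainder in a way compatible with the weight. Since $\|g''\|_N = \sup_{u\ge 0} w_N(u)|g''(u)|$, one has $|g''(u)| \le \|g''\|_N (1+u^N)$. The crucial elementary observation is that for $u$ lying between $x$ and $t$ one has $u \le \max(x,t)$, hence $1+u^N \le (1+x^N)+(1+t^N) = 1/w_N(x) + 1/w_N(t)$. Combining this with $\int_{\min(x,t)}^{\max(x,t)} |t-u|\,du = \tfrac12 (t-x)^2$ yields the pointwise bound
\[ \left| \int_x^t (t-u)g''(u)\,du \right| \le \frac{\|g''\|_N}{2}\,(t-x)^2\left[ \frac{1}{w_N(x)} + \frac{1}{w_N(t)} \right]. \]

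Substituting this bound and splitting the two weight contributions, the estimate reduces to controlling $w_N(x)\,S_n((t-x)^2;x)/w_N(x) = S_n((t-x)^2;x)$ and $w_N(x)\,S_n((t-x)^2/w_N(t);x)$. The first equals $\mu_{n,2}(x) = x/b_n$ by Lemma \ref{L5}(b), while the second is exactly the quantity bounded by $M_N x/b_n$ in (\ref{glo5}) of Lemma \ref{L8}. Adding the two and absorbing $\|g''\|_N/2$ and the constants into a single constant $M_N$ gives (\ref{glo7}); the case $N=0$ is the special case $w_0\equiv 1$ and coincides with the computation already carried out in Theorem \ref{T2}.

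I expect the main obstacle to be arranging the weighted remainder estimate so that, after multiplying by $w_N(x)$, each piece matches the hypotheses of Lemmas \ref{L5} and \ref{L8}: the naive bound $|g''(u)|\le \|g''\|_N/w_N(u)$ produces a factor $1/w_N(u)$ evaluated at an interior point $u$, which is neither a function of $t$ (so it cannot feed directly into (\ref{glo5})) nor a function of $x$ alone. The split $1/w_N(u) \le 1/w_N(x) + 1/w_N(t)$ for $u$ between $x$ and $t$ is precisely what resolves this, separating the integrand into one term already handled by the second moment $\mu_{n,2}$ and one term handled by the weighted second moment of Lemma \ref{L8}.
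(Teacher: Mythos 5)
Your proposal is correct and follows essentially the same route as the paper: the paper also uses the Taylor remainder (in the equivalent double-integral form $\int_x^t\int_x^s g''(u)\,du\,ds$), bounds it by $\tfrac12\Vert g''\Vert_N (t-x)^2\left[\tfrac{1}{w_N(x)}+\tfrac{1}{w_N(t)}\right]$ via exactly the split you identify, and then concludes from $\mu_{n,2}(x)=x/b_n$ together with (\ref{glo5}) of Lemma \ref{L8}.
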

\begin{proof}
Using Taylor's theorem, we can write
\begin{equation*}
g(t) - g(x) = (t-x)g^{'}(x) + \int_x^t \int_x^s g^{''}(u) du ds \qquad \qquad (x, t \geq 0)
\end{equation*}
and the estimate 
\begin{eqnarray*}
\left|\int_x^t \int_x^s \right| g^{''}(u) \left| du ds \right| &\leq & \Vert g^{''}\Vert_N \left| \int_x^t \int_x^s  \dfrac{1}{w_N(u)} du ds \right| \\ &\leq & (1/2) \Vert g^{''}\Vert_N(t-x)^2\left[ \dfrac{1}{w_N(x)} + \dfrac{1}{w_N(t)}\right],
\end{eqnarray*}
the proof follows since by lemma \ref{L3}(a)-(b), lemma \ref{L5}(b) and (\ref{glo5})
\begin{eqnarray*}
w_N(x)\left|S_n(g ;x) - g(x) \right| &\leq & (1/2) \Vert g^{''}\Vert_N \left[\mu_{n,2}(x) + w_N(x) S_n\left(\dfrac{(t-x)^2}{w_N(t)};x \right) \right] \\ &\leq & M_N \Vert g^{''} \Vert_N \dfrac{x}{b_n}.
\end{eqnarray*}
\end{proof}
Now we can write the proof of direct part of theorem \ref{T4}.
\begin{theorem}\label{T5}
For any $N \in \mathbb{N} \cup \left\lbrace 0 \right\rbrace$, $f \in C_N$ there holds for all $x \geq 0$, $n \in \mathbb{N}$
\begin{equation}\label{glo8}
w_N(x) \left| S_n(f;x) - f(x) \right| \leq M_N \omega_N^2\left( f, \sqrt{\dfrac{x}{b_n}}\right).
\end{equation}
In particular, if $f \in Lip_N^2 \alpha$ for some $\alpha \in (0, 2]$, then 
\begin{equation*}
w_N(x) \left| S_n(f;x) - f(x) \right| \leq M_N \left[\dfrac{x}{b_n}\right]^{\alpha/2}.
\end{equation*}
\end{theorem}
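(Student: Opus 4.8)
The plan is to prove Theorem \ref{T5} by the standard $K$-functional technique, exploiting the $C_N$-norm estimates already established in Lemmas \ref{L7}--\ref{L9}. The first inequality (\ref{glo8}) is the substantive one; the ``in particular'' clause follows from it instantly by the definition of $Lip_N^2\alpha$, since if $\omega_N^2(f,\delta)=O(\delta^\alpha)$ then $\omega_N^2(f,\sqrt{x/b_n})=O((x/b_n)^{\alpha/2})$, absorbing the implied constant into $M_N$.

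To prove (\ref{glo8}), I would introduce the weighted $K$-functional adapted to $C_N$, namely
\begin{equation*}
K_N(f,\delta) = \inf_{g \in C_N^2}\left\{ \Vert f - g \Vert_N + \delta \Vert g'' \Vert_N \right\},
\end{equation*}
which is known to be equivalent to the second modulus $\omega_N^2(f,\sqrt{\delta})$ up to a constant depending only on $N$ (this is the weighted analogue of (\ref{E8})). Fix $x \geq 0$, $n \in \mathbb{N}$, and an arbitrary $g \in C_N^2$. The triangle inequality splits the error as
\begin{equation*}
w_N(x)\left| S_n(f;x) - f(x) \right| \leq w_N(x)\left| S_n(f-g;x) \right| + w_N(x)\left| (f-g)(x) \right| + w_N(x)\left| S_n(g;x) - g(x) \right|.
\end{equation*}
The middle term is at most $\Vert f-g \Vert_N$ directly. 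For the first term I would apply the uniform boundedness estimate (\ref{glo4}) of Lemma \ref{L7} to the function $f-g \in C_N$, giving $w_N(x)\left|S_n(f-g;x)\right| \leq \Vert S_n(f-g;\cdot)\Vert_N \leq M_N \Vert f-g \Vert_N$. The third term is precisely what Lemma \ref{L9} controls: it is bounded by $M_N \Vert g'' \Vert_N \, x/b_n$.

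Combining these three bounds yields
\begin{equation*}
w_N(x)\left| S_n(f;x) - f(x) \right| \leq M_N\left( \Vert f-g \Vert_N + \frac{x}{b_n}\Vert g'' \Vert_N \right),
\end{equation*}
where I have enlarged $M_N$ to dominate the constant $1$ from the middle term. Since $g \in C_N^2$ was arbitrary, taking the infimum over all such $g$ produces $M_N\, K_N(f, x/b_n)$ on the right. Finally, invoking the $K$-functional/modulus equivalence converts this into $M_N\,\omega_N^2(f,\sqrt{x/b_n})$, which is (\ref{glo8}). The main obstacle is not any single estimate --- each piece is handed to us by an earlier lemma --- but rather ensuring that the equivalence $K_N(f,\delta)\asymp\omega_N^2(f,\sqrt{\delta})$ genuinely holds in the polynomial weight setting with a constant depending only on $N$; this requires the weighted Steklov-mean machinery analogous to (\ref{e3}), and I would either cite \cite{Bec} for it or verify that the modified Steklov means reproduce the two required estimates ($\Vert f - f_\delta\Vert_N \lesssim \omega_N^2(f,\sqrt{\delta})$ and $\Vert f_\delta''\Vert_N \lesssim \delta^{-1}\omega_N^2(f,\sqrt{\delta})$) in the $\Vert\cdot\Vert_N$ norm.
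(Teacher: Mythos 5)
Your proposal is correct and is essentially the paper's own argument: the paper performs the same three-term split using Lemmas \ref{L7} and \ref{L9}, but instead of passing through the abstract weighted $K$-functional it simply takes $g = f_h$ (the modified Steklov mean) and optimizes $h = \sqrt{x/b_n}$, invoking (\ref{e3}) --- which supplies exactly the two weighted Steklov estimates $\Vert f - f_h \Vert_N \leq \omega_N^2(f,h)$ and $\Vert f_h'' \Vert_N \leq 9h^{-2}\omega_N^2(f,h)$ that you flag as the ``main obstacle.'' Consequently the only direction of the equivalence your argument needs, $K_N(f,\delta) \leq C_N\,\omega_N^2\left(f,\sqrt{\delta}\right)$, is already available in the paper via (\ref{e3}), so no appeal to a full two-sided $K$-functional/modulus equivalence is required.
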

\begin{proof}
For $N=0$, the assertion follows from theorem \ref{T2}. For $f \in C_N$, $h>0$ one has by lemma \ref{L7}, \ref{L9} and by (\ref{e3}) that 
\begin{eqnarray*}
w_N(x)\left|S_n(f ;x) - f(x) \right| &\leq & w_N(x)\left|S_n([f-f_h] ;x) \right| \\ && \quad + w_N(x)\left|S_n(f_h ;x) - f_h(x) \right| + w_N(x) \left|f_h(x) - f(x) \right| \\ &\leq & \Vert f - f_h \Vert_N \left[w_N(x) S_n(1/w_N(t);x)+1\right] + M_N \Vert f_h^{''}\Vert_N \dfrac{x}{b_n} \\ &\leq &  M_N \omega_N^2(f,h)\left[1 + \dfrac{x}{b_nh^2} \right],
\end{eqnarray*}
so that (\ref{e3}) follows upon setting $h = \sqrt{\dfrac{x}{b_n}}$.
\end{proof}
Remark: The above theorem implies that for each $f \in C_N$, $x \geq 0$
\begin{eqnarray*}
\lim_{n \rightarrow \infty }w_N(x)\left|S_n(f ;x) - f(x) \right|=0.
\end{eqnarray*}

\subsection{Inverse result}
The main tool for the proof of the inverse theorems in non-optimal case $0<\alpha<2$ is an appropriate Bernstein-type inequality.
\begin{lemma}\label{L10}
For $f \in C_N$ and $x, \delta >0$ there holds
\begin{equation}\label{glo9}
w_N(x)\left| S_n^{''}(f;x)\right| \leq M_N \omega^2_N (f, \delta) \left[\dfrac{b_n}{x} + \delta^{-2} \right].
\end{equation}
\end{lemma}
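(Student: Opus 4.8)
The plan is to reduce everything to two exact identities for the second derivative of the kernel, then split $f$ by a Steklov mean, estimating the smooth part through $\Vert f_\delta^{''}\Vert_N$ and the rough part through the moment bounds of Lemma~\ref{L8}.

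First I would record two representations of $S_n^{''}$. Differentiating $s_{b_n,k}$ once via (\ref{e21}) gives $s_{b_n,k}^{'}(x)=\tfrac{1}{x}(k-b_nx)s_{b_n,k}(x)$; differentiating again yields the kernel identity
\[
s_{b_n,k}^{''}(x)=\frac{1}{x^2}\bigl[(k-b_nx)^2-k\bigr]s_{b_n,k}(x),
\qquad\text{so}\qquad
S_n^{''}(f;x)=\frac{1}{x^2}\sum_{k=0}^{\infty}\bigl[(k-b_nx)^2-k\bigr]s_{b_n,k}(x)\,f\!\Bigl(\frac{k}{b_n}\Bigr).
\]
Alternatively, using $s_{b_n,k}^{''}=b_n^2\bigl(s_{b_n,k}-2s_{b_n,k-1}+s_{b_n,k-2}\bigr)$ and reindexing gives the divided-difference form
\[
S_n^{''}(f;x)=b_n^2\sum_{k=0}^{\infty}s_{b_n,k}(x)\,\Delta_{1/b_n}^2 f\!\Bigl(\frac{k}{b_n}\Bigr).
\]
Both are elementary; the point is that the first form exposes the second moments while the second exposes second differences.

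Next, fix $\delta>0$, let $f_\delta$ be the (modified) Steklov mean, and use (\ref{e3}) to get $\Vert f-f_\delta\Vert_N\le\omega_N^2(f,\delta)$ and $\Vert f_\delta^{''}\Vert_N\le 9\delta^{-2}\omega_N^2(f,\delta)$, with $f_\delta\in C_N^2$. Writing $S_n^{''}(f;x)=S_n^{''}(f_\delta;x)+S_n^{''}(f-f_\delta;x)$, I would handle the smooth term $f_\delta$ with the divided-difference form: since $\Delta_{1/b_n}^2 f_\delta(k/b_n)=\int_0^{1/b_n}\!\!\int_0^{1/b_n}f_\delta^{''}(\tfrac{k}{b_n}+s+t)\,ds\,dt$, one has $|\Delta_{1/b_n}^2 f_\delta(k/b_n)|\le b_n^{-2}\Vert f_\delta^{''}\Vert_N/w_N(\xi_k)$ with $\xi_k\in[k/b_n,(k+2)/b_n]$. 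Because $s_{b_n,k}\ge 0$ and $\xi_k$ differs from $k/b_n$ by at most $2/b_n$, the computation of Lemma~\ref{L7} gives $w_N(x)\sum_k s_{b_n,k}(x)/w_N(\xi_k)\le M_N$, so $w_N(x)|S_n^{''}(f_\delta;x)|\le M_N\Vert f_\delta^{''}\Vert_N\le M_N\delta^{-2}\omega_N^2(f,\delta)$, which accounts for the $\delta^{-2}$ term.

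For the rough term $r:=f-f_\delta$ I would switch to the kernel form, bound $|r(k/b_n)|\le\Vert r\Vert_N/w_N(k/b_n)$ and $|(k-b_nx)^2-k|\le(k-b_nx)^2+k$, and recognize the two resulting sums as $b_n^2\,S_n((t-x)^2/w_N(t);x)$ and $b_n\,S_n(t/w_N(t);x)$. By (\ref{glo5}) and (\ref{glo6}) of Lemma~\ref{L8}, $w_N(x)$ times each is $\le M_N b_n x$, so after the prefactor $1/x^2$ one obtains $w_N(x)|S_n^{''}(r;x)|\le M_N\,(b_n/x)\,\Vert r\Vert_N\le M_N\,(b_n/x)\,\omega_N^2(f,\delta)$; adding the two parts yields (\ref{glo9}). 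I expect the rough term to be the crux: bounding it through the divided-difference form would only produce the useless factor $b_n^2$, and the gain down to $b_n/x$ comes precisely from routing the estimate through the exact second-moment kernel together with the variance bound $S_n((t-x)^2/w_N(t);x)=O\!\bigl(x/(b_nw_N(x))\bigr)$ supplied by Lemma~\ref{L8}.
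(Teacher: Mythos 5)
Your proposal is correct and takes essentially the same route as the paper: the same two representations of $S_n^{''}$ (the moment-kernel form (\ref{glo10}) and the second-difference form (\ref{glo11})), the same Steklov-mean splitting via (\ref{e3}), with Lemma \ref{L7} absorbing the weight shift $k/b_n \mapsto (k+2)/b_n$ for the smooth part and Lemma \ref{L8} supplying the bounds (\ref{glo5})--(\ref{glo6}) for the rough part. Your explicit bound $M_N(b_n/x)\Vert f-f_\delta\Vert_N$ for the rough term is the intended estimate and in fact corrects an evident typo in (\ref{glo13}), whose right-hand side should read $M_N b_n/x$ once the prefactor $(b_n/x)^2$ on the left is taken into account.
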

\begin{proof}
We can represent $S_n^{''}(f;x)$ in two ways:
\begin{equation}\label{glo10}
S_n^{''}(f;x)= \left[\dfrac{b_n}{x}\right]^2\sum_{k=0}^{\infty}r_{n,k}(x)f\left(\dfrac{k}{b_n}\right)s_{b_n,k}(x),
\end{equation}
where $r_{n,k}(x)=\left[\left(\dfrac{k}{b_n}-x \right)^2 - \dfrac{k}{b_n^2} \right]$ and 
\begin{equation}\label{glo11}
S_n^{''}(f;x)= b_n^2\sum_{k=0}^{\infty}\Delta_{1/b_n}^2 f\left(\dfrac{k}{b_n}\right)s_{b_n,k}(x).
\end{equation}
From (\ref{e3}) there follows
\begin{eqnarray}\label{glo12}
\left|\Delta_{1/b_n}^2 f_{\delta}\left(\dfrac{k}{b_n}\right)\right| &\leq & \int_0^{1/b_n}\int_0^{1/b_n}\left|f_{\delta}^{''}(t+u+v) \right|du dv \nonumber \\ & \leq & \Vert f_{\delta}^{''}\Vert_N\int_0^{1/b_n}\int_0^{1/b_n}\dfrac{1}{w_N(t+u+v)} du dv \nonumber \\ & \leq & 9 (b_n\delta)^{-2}\omega_N^2(f, \delta)/w_N(t+ 2/b_n).
\end{eqnarray}
By (\ref{glo10}), (\ref{glo11}) and (\ref{glo12}) one has for $x, \delta >0$ 
\begin{eqnarray*}
w_N(x) \left|S_n^{''}(f;x) \right| &\leq & w_N(x) \left|S_n^{''}\left(f - f_{\delta};x \right) \right| + w_N(x) \left|S_n^{''}\left(f_{\delta};x\right) \right| \\ & \leq & w_N(x) (b_n/x)^2 \sum_{k=0}^{\infty}\left| r_{n,k}(x)\right| \left| f\left(\dfrac{k}{b_n} \right) - f_{\delta}\left(\dfrac{k}{b_n} \right) \right| s_{b_n,k}(x) \\ && \quad + w_N(x) b_n^2 \sum_{k=0}^{\infty} \left|\Delta_{1/b_n}^2 f_{\delta}\left( \dfrac{k}{b_n}\right)\right| s_{b_n,k}(x)\\ & \leq & \omega_N^2(f, \delta)\left\lbrace w_N(x) (b_n/x)^2 \sum_{k=0}^{\infty}\left| r_{n,k}(x)\right| \left[w_N\left(\dfrac{k}{b_n} \right) \right]^{-1}s_{b_n,k}(x) \right. \\ && \left. \quad  + 18 \delta^{-2} w_N(x)  \sum_{k=0}^{\infty} \left[w_N\left(\dfrac{k+2}{b_n} \right) \right]^{-1}s_{b_n,k}(x)\right\rbrace.
\end{eqnarray*}
To obtain (\ref{glo9}) we have to show that 
\begin{equation}\label{glo13}
w_N(x) (b_n/x)^2 \sum_{k=0}^{\infty}\left| r_{n,k}(x)\right| \left[w_N\left(\dfrac{k}{b_n} \right) \right]^{-1}s_{b_n,k}(x) \leq M_N \dfrac{x}{b_n} 
\end{equation}
and
\begin{equation}\label{glo14}
w_N(x)  \sum_{k=0}^{\infty} \left[w_N\left(\dfrac{k+2}{b_n} \right) \right]^{-1}s_{b_{\bar{n}},k}(x)\leq M_N.
\end{equation}
Now (\ref{glo14}) follows from (\ref{glo3}) since there exists a constant $M_N^{'}$ such that for all $k \in \mathbb{N} \cup {0}$ , $n \in \mathbb{N}$
\begin{equation*}
\left[w_N\left( \dfrac{k+2}{b_n}\right)\right]^{-1}\leq M_N^{'}\left[w_N(k/b_{\bar{n}})\right]^{-1},
\end{equation*}
whereas (\ref{glo13}) follows from lemma \ref{L7}, \ref{L8} since
\begin{equation*}
w_N(x)\left[S_n((t-x)^2/w_N(t);x) + \dfrac{1}{b_n}S_n(t/w_N(t);x)\right]\leq M_N\dfrac{x}{b_n}.
\end{equation*}
\end{proof}
\begin{lemma}[\cite{Bec},lemma 10]\label{L11}
For $x \geq 0$, $0 < h \leq 1$
\begin{equation}\label{glo15}
\int_0^h \int_0^h \dfrac{ds dt}{x + s + t } \leq \dfrac{6h^2}{x + 2h}.
\end{equation}
\end{lemma}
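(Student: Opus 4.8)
The plan is to reduce the claimed bound to a single explicit one-dimensional integral. Write $I(x) := \int_0^h\int_0^h \frac{ds\,dt}{x+s+t}$ for the quantity on the left. Rather than estimating $I(x)$ directly, I would multiply through by the factor $x+2h$ appearing in the target denominator and instead establish the cleaner inequality $(x+2h)\,I(x) \le 4h^2\ln 2$, from which the stated estimate follows at once since $4\ln 2 \approx 2.77 \le 6$.

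The first step is the algebraic identity $\frac{x+2h}{x+s+t} = 1 + \frac{2h-s-t}{x+s+t}$, valid on the square $[0,h]^2$. Integrating the constant term over the square contributes exactly $h^2$, so $(x+2h)\,I(x) = h^2 + \int_0^h\int_0^h \frac{2h-s-t}{x+s+t}\,ds\,dt$. The key observation is that on $[0,h]^2$ one has $2h - s - t \ge 0$, while $x \ge 0$ gives $x+s+t \ge s+t$; hence the numerator is nonnegative and the denominator may only be decreased, yielding the pointwise bound $\frac{2h-s-t}{x+s+t} \le \frac{2h-s-t}{s+t}$. This removes the parameter $x$ from the remaining integral entirely.

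It then remains to evaluate the $x$-free integral, which reduces to the single computation $\int_0^h\int_0^h \frac{ds\,dt}{s+t} = 2h\ln 2$: the inner integration gives $\ln\frac{s+h}{s}$, and an elementary (improper but convergent) integration of $(s+h)\ln(s+h) - s\ln s$ over $[0,h]$ produces $2h\ln 2$, the $s\ln s$ terms cancelling the boundary contribution at $s=0$. Writing $\frac{2h-s-t}{s+t} = \frac{2h}{s+t} - 1$ then gives $\int_0^h\int_0^h \frac{2h-s-t}{s+t}\,ds\,dt = 2h(2h\ln 2) - h^2 = 4h^2\ln 2 - h^2$, and assembling the pieces yields $(x+2h)\,I(x) \le h^2 + (4h^2\ln 2 - h^2) = 4h^2\ln 2 \le 6h^2$, which is the assertion.

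I do not anticipate a genuine obstacle; the only point requiring minor care is the improper integral $\int_0^h \ln s\,ds$ near $s=0$, which converges because $\lim_{s\to 0^+} s\ln s = 0$. It is worth remarking that neither the hypothesis $h \le 1$ nor any upper bound on $x$ is actually used: the inequality holds for all $x \ge 0$ and $h > 0$, and the argument even delivers the sharper constant $4\ln 2$ in place of $6$.
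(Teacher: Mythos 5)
Your proof is correct, and it takes a genuinely different route from the paper's. The paper observes that $\int_0^h\int_0^h \frac{ds\,dt}{x+s+t} = \Delta_h^2[t\log t](x)$ and then argues by a case split: for $x \in [0,h]$ it drops $x$ from the denominator to get the exact value $2h\log 2$ and compares with $\frac{6h^2}{3h}$, while for $h \le x$ it bounds the integrand by $\frac{1}{x}$ and uses $x+2h \le 3x$. You instead multiply through by $x+2h$, use the identity $\frac{x+2h}{x+s+t} = 1 + \frac{2h-s-t}{x+s+t}$, and exploit that the numerator $2h-s-t$ is nonnegative on $[0,h]^2$ so that replacing $x+s+t$ by $s+t$ can only increase the integrand; this eliminates $x$ in one stroke and reduces everything to the same exact computation $\int_0^h\int_0^h \frac{ds\,dt}{s+t} = 2h\log 2$ that the paper uses in its first case. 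What your version buys: no case distinction, a uniform and sharper constant $4\log 2 \approx 2.77$ in place of $6$, and the explicit observation that neither $h \le 1$ nor any restriction on $x$ is needed (the paper's hypotheses are inherited from Becker but not actually used by either argument). It also sidesteps the typographical slips in the paper's second case, where the limits are written $\int_0^x\int_0^x$ rather than $\int_0^h\int_0^h$ and the displayed bound $\frac{3h^2}{3x}$ should read $\frac{h^2}{x} \le \frac{3h^2}{x+2h}$. What the paper's approach buys is the connection to the second-difference operator $\Delta_h^2$, which ties the lemma to the smoothness machinery used throughout Section 4; your handling of the improper integral at $s=0$ (via $s\log s \to 0$) is the only delicate point, and you address it adequately since all integrands involved are nonnegative.
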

\begin{proof}
One has
\begin{equation*}
\int_0^h \int_0^h \dfrac{ds dt}{x + s + t } = \Delta_h^2[t \log t](x),
\end{equation*}
which in particular hold true for $x=0$. This yields for $x \in [0,h]$
\begin{equation*}
\int_0^h \int_0^h \dfrac{ds dt}{x + s + t } \leq \int_0^h \int_0^h \dfrac{ds dt}{ s + t } = 2h \log 2 < \dfrac{6h^2}{3h}\leq \dfrac{6h^2}{x+2h}.
\end{equation*}
For $h \leq x$ one has 
\begin{equation*}
\int_0^x\int_0^x \dfrac{ds dt}{x + s + t } \leq  \dfrac{3h^2}{3x}\leq \dfrac{3h^2}{x+2h}.
\end{equation*}
This proves (\ref{glo15}).
\end{proof}
\begin{theorem}\label{T6}
Let $N \in \mathbb{N} \cup \left\lbrace 0 \right\rbrace$. If $f \in C_N$ satisfies for some $\alpha \in (0, 2)$ and all $N \in \mathbb{N}$, $x \geq 0$
\begin{equation}\label{glo16}
w_N(x)\left|S_n(f;x) - f(x) \right| \leq M_N \left[ \dfrac{x}{b_n} \right]^{\alpha/2},
\end{equation}
then $f \in Lip_N^2 \alpha$.
\end{theorem}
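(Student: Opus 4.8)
The plan is to establish the converse direction (Theorem~\ref{T6}) by the classical Berens--Lorentz method, taking the Bernstein-type inequality of Lemma~\ref{L10} as the principal ingredient and the integral estimate of Lemma~\ref{L11} to control the weight near the origin. Since $f\in Lip_N^2\alpha$ means precisely $\omega_N^2(f,\delta)=O(\delta^\alpha)$ as $\delta\to 0^{+}$, it suffices to bound $w_N(x)\,|\Delta_h^2 f(x)|$ by a fixed multiple of $h^\alpha$, uniformly in $x\ge 0$, for all small $h$.

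First I would record a weight-comparability estimate: for $0\le\eta\le 2$ and $x\ge 0$ one has $w_N(x)/w_N(x+\eta)\le M_N$, so that $w_N$ may be transported between the three abscissae $x,\,x+h,\,x+2h$ at the cost of a fixed constant; incidentally this also shows $\omega_N^2(f,\cdot)$ is bounded, which is needed later. Then, for an index $n$ still to be chosen, I would split
\begin{equation*}
\Delta_h^2 f(x)=\Delta_h^2\big(f-S_n f\big)(x)+\int_0^h\!\!\int_0^h \big(S_n f\big)''(x+u+v)\,du\,dv .
\end{equation*}
The first summand is estimated directly from the hypothesis (\ref{glo16}) evaluated at $x,x+h,x+2h$, giving a contribution of order $\big((x+2h)/b_n\big)^{\alpha/2}$. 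For the second summand I would insert the weight, apply Lemma~\ref{L10} with a smoothing scale $\delta$, and use Lemma~\ref{L11} to evaluate the double integral of $b_n/(x+u+v)$, producing a contribution of order $\omega_N^2(f,\delta)\big[h^2 b_n/(x+2h)+h^2/\delta^2\big]$.

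The decisive step is the choice of the operator index. Selecting $n$ so that $b_n$ is comparable to $(x+2h)/\delta^2$ makes the first contribution of order $\delta^\alpha$ and the factor $h^2 b_n/(x+2h)$ of order $h^2/\delta^2$ at the same time, so that after taking the supremum over $0<h\le t$ all dependence on $x$ cancels and one is left with the self-referential recursion
\begin{equation*}
\omega_N^2(f,t)\le M_N\Big[\delta^\alpha+\big(t/\delta\big)^2\,\omega_N^2(f,\delta)\Big],\qquad 0<t\le\delta\le\delta_0 .
\end{equation*}
I would then invoke the Berens--Lorentz lemma: iterating this inequality along a geometric sequence of scales $\delta_j=K^{j}t$, with $K$ fixed so large that the contraction factor $M_N K^{\alpha-2}<1$ (possible exactly because $\alpha<2$), and summing the resulting geometric series gives $\omega_N^2(f,t)\le M_N\,t^\alpha$, i.e. $f\in Lip_N^2\alpha$.

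I expect the obstacle to be twofold. The genuine difficulty is the circular occurrence of $\omega_N^2(f,\delta)$ on the right of the Bernstein inequality: it cannot be removed by a single estimate, since taking $\delta=h$ only reproduces $\omega_N^2(f,h)$ with a constant exceeding $1$; the entire purpose of the Berens--Lorentz iteration is to defeat this by decoupling the difference step $h$ from the smoothing scale $\delta$. The technical difficulty is the selection of $b_n$: because $(b_n)_1^\infty$ is a fixed sequence, the target value $(x+2h)/\delta^2$ can only be matched up to a bounded factor, which requires the gaps $b_{n+1}/b_n$ to be bounded, and the range of $x$ very close to $0$ — where $(x+2h)/\delta^2$ may drop below $b_1$ — must be treated separately, using there that $w_N\approx 1$ and that the first contribution is already $O(\delta^\alpha)$.
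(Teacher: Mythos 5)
Your proposal reproduces the paper's proof essentially verbatim: the same splitting $\Delta_h^2 f(x)=\Delta_h^2\bigl(f-S_nf\bigr)(x)+\int_0^h\!\int_0^h S_n''(f;x+s+u)\,ds\,du$, the same estimation of the first part by the hypothesis (\ref{glo16}) at $x,\,x+h,\,x+2h$ with the weight-transport factor $w_N(x)/w_N(x+2h)\leq 3^N$, the same use of Lemmas \ref{L10} and \ref{L11} with the choice $b_n\approx(x+2h)/\delta^2$ to arrive at the recursion (\ref{glo17}), and the Berens--Lorentz iteration which the paper leaves implicit in its opening ``it is sufficient to show'' reduction. Your closing caveat about matching $b_n$ to the target value $(x+2h)/\delta^2$ --- which requires bounded ratios $b_{n+1}/b_n$ and a separate treatment near $x=0$, assumptions the paper's ``choose $b_n$ such that'' step silently makes --- is in fact more careful than the paper itself.
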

\begin{proof}
It is sufficient to show that for $0< h$, $\delta \leq 1$, $\delta < \sqrt{h}$ 
\begin{equation}\label{glo17}
\omega_N^2(f,h) \leq M_N \left[\delta^{\alpha} + (h/\delta)^2\omega_N^2(f,\delta) \right]. 
\end{equation}
To this end, fix $0 < h$, $\delta \leq 1$, $\delta < \sqrt{h}$, $x \geq 0$. Using lemmas (\ref{L10}), (\ref{L11}) and observing that $w_N(x)/w_N(x+2h)\leq 3^N$ as $h \leq 1$, there follows from (\ref{glo16}) for all $n \in \mathbb{N}$
\begin{eqnarray*}
w_N(x)\left|\Delta_h^2f(x) \right| & \leq & w_N(x)\left|f(x+2h) - S_n(f(t + 2h);x) \right| \\ && + 2w_N(x)\left|S_n(f(t + h);x) - f(x+h) \right| \\ && + w_N(x)\left|f(x) - S_n(f(t);x) \right| + w_N(x)\left|\Delta_h^2S_n(f(t);x) \right| \\ & \leq & M_N \left[ \dfrac{x + 2h}{b_n} \right]^{\alpha/2} \left\lbrace \dfrac{w_N(x)}{w_N(x + 2h)} + \dfrac{2w_N(x)}{w_N(x + h)} + 1 \right\rbrace \\ && + w_N(x) \int_0^h \int_0^h\left| S_n^{''}(t+s+u;x)\right| ds du \\ & \leq & M_N \left[ \dfrac{x + 2h}{b_n} \right]^{\alpha/2} + M_N\omega_N^2(f,\delta)\left\lbrace \dfrac{w_N(x)}{w_N(x + 2h)} \right\rbrace \\ && \left[b_n\int_0^h \int_0^h \dfrac{ds du}{x + s+ u} + \left(\dfrac{h}{\delta} \right)^2 \right] \\ & \leq & M_N \left\lbrace \left[ \dfrac{x + 2h}{b_n} \right]^{\alpha/2} + \left[M b_n \dfrac{h^2}{x+2h} + \left(\dfrac{h}{\delta} \right)^2 \right]\omega_N^2(f,\delta) \right\rbrace.
\end{eqnarray*}
For the case $x=0$ let us only note that the estimate holds true in view of the existence of the integral for $x=0$ and the continuity of the expressions involved. Now choose $b_n$ such that 
\begin{equation*}
\sqrt{\dfrac{x+2h}{b_n}} \leq \delta < \sqrt{\dfrac{x+2h}{(b_n - 1)}} \leq \sqrt{2} \sqrt{\dfrac{x+2h}{b_n}},
\end{equation*}
the last expression being $\geq 2 \sqrt{\dfrac{h}{b_n}} $. Then 
\begin{equation*}
\omega_N^2(f,h) \leq M_N \left[\delta^{\alpha} + (h/\delta)^2\omega_N^2(f,\delta) \right]. 
\end{equation*}
proving (\ref{glo17}). This completes the proof.
\end{proof}

\begin{theorem}\label{T7}
Let $N \in \mathbb{N} \cup \left\lbrace 0 \right\rbrace$. If $f \in C_N$ satisfies for all $N \in \mathbb{N}$, $x \geq 0$
\begin{equation}\label{glo18}
w_N(x)\left|S_n(f;x) - f(x) \right| \leq M_N \left[ \dfrac{x}{b_n} \right],
\end{equation}
then $f \in Lip_N^2 2$.
\end{theorem}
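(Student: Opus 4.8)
The plan is to begin exactly as in the proof of Theorem \ref{T6}, specializing the exponent to $\alpha=2$. Writing $\Delta_h^2 S_n(f;x)=\int_0^h\int_0^h S_n^{''}(f;x+s+u)\,ds\,du$, inserting the Bernstein-type bound of Lemma \ref{L10} at the points $x+s+u$, controlling the weight ratios $w_N(x)/w_N(x+s+u)$ by a constant (since $s+u\le 2h\le 2$), and estimating the resulting double integral by Lemma \ref{L11}, I would first reduce the hypothesis (\ref{glo18}) to
\begin{equation*}
w_N(x)\left|\Delta_h^2 f(x)\right| \le M_N\left\{\frac{x+2h}{b_n} + \left[b_n\frac{h^2}{x+2h} + \frac{h^2}{\delta^2}\right]\omega_N^2(f,\delta)\right\},
\end{equation*}
and then, choosing the index $n$ so that $\frac{x+2h}{b_n}\le\delta^2<\frac{x+2h}{b_n-1}$ exactly as in Theorem \ref{T6}, to the functional inequality
\begin{equation*}
\omega_N^2(f,h) \le M_N\left[\delta^2 + (h/\delta)^2\,\omega_N^2(f,\delta)\right],\qquad 0<\delta<\sqrt{h}\le 1.
\end{equation*}

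At this point the argument for $0<\alpha<2$ breaks down, and this is where the main difficulty lies. For $\alpha<2$ the source order $\delta^\alpha$ sits strictly below the comparison order $2$ carried by $(h/\delta)^2$, and the Berens--Lorentz lemma converts the functional inequality into $\omega_N^2(f,h)=O(h^\alpha)$. At the saturation exponent $\alpha=2$ the source order equals the comparison order, and the same lemma yields only $\omega_N^2(f,h)=O\big(h^2\log(1/h)\big)$; indeed $\omega(h)=h^2\log(1/h)$ already satisfies the displayed inequality, so no purely iterative argument can remove the logarithm. The endpoint therefore genuinely requires saturation machinery rather than a sharper version of the non-optimal proof.

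To close the gap I would argue by weak-$*$ compactness. Rewriting (\ref{glo18}) as $\sup_{x>0}\frac{w_N(x)}{x}\,b_n\,|S_n(f;x)-f(x)|\le M_N$ shows that $g_n:=b_n\big(S_n(f;\cdot)-f\big)$ is bounded in the space carrying the weight $w_N(x)/x$, which is isometrically a dual space; hence a subsequence $g_{n_k}$ converges weak-$*$ to a limit $g$ with $\frac{w_N(x)}{x}|g(x)|\le M_N$. The Voronovskaja-type relation of Theorem \ref{T3}, applied to smooth compactly supported test functions together with the moment estimates of Lemma \ref{L5}, identifies the limiting operator of $b_n(S_n-I)$ as $\tfrac{x}{2}\frac{d^2}{dx^2}$, so that $g=\tfrac{x}{2}f^{''}$ in the distributional sense. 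This forces $w_N(x)|f^{''}(x)|\le 2M_N$, i.e. $f^{'}$ is locally absolutely continuous and $f^{''}\in C_N$ with $\|f^{''}\|_N\le 2M_N$; using $\Delta_h^2 f(x)=\int_0^h\int_0^h f^{''}(x+s+u)\,ds\,du$ and bounding $|f^{''}|$ by $2M_N/w_N$ then gives $\omega_N^2(f,h)=O(h^2)$, that is $f\in Lip_N^2\,2$.

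The step I expect to be the crux is the identification of the weak-$*$ limit $g$ with $\tfrac{x}{2}f^{''}$: one must pass $b_n(S_n-I)$ to its formal second-order limit on the unbounded half-line while keeping the polynomial weight under control, which amounts to computing the action of the formal adjoint of $S_n$ on test functions and checking that its Voronovskaja limit is the adjoint of $\tfrac{x}{2}\frac{d^2}{dx^2}$. Everything preceding this is routine reuse of Lemmas \ref{L5}, \ref{L10}, \ref{L11} and the scheme of Theorem \ref{T6}, and everything following it is a short Taylor estimate.
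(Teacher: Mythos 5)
Your first half is sound: rerunning the scheme of Theorem \ref{T6} with $\alpha=2$ does yield the functional inequality $\omega_N^2(f,h)\leq M_N\left[\delta^2+(h/\delta)^2\omega_N^2(f,\delta)\right]$, and your diagnosis that the Berens--Lorentz iteration then saturates at $O\left(h^2\log(1/h)\right)$ is correct --- the endpoint does need a different idea. The genuine gap is exactly the step you flag as the crux, and it does not go through as proposed. Theorem \ref{T3} cannot be ``applied to smooth compactly supported test functions'': it is a statement about $S_n$ acting on $f$ and \emph{presupposes} $f',f''\in C_\gamma$, which is the regularity to be proved. The dual formulation fails for a structural reason: $S_n$ is a discrete sampling operator, so its adjoint carries a test function $\phi$ not to a function but to the purely atomic measure $\sum_k\left(\int_0^\infty s_{b_n,k}(x)\phi(x)\,dx\right)\delta_{k/b_n}$. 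Writing $c_{n,k}=b_n\int_0^\infty s_{b_n,k}\phi\,dx$, one has
\begin{equation*}
\left\langle b_n\left(S_nf-f\right),\phi\right\rangle=b_n\left[\frac{1}{b_n}\sum_{k=0}^\infty f\!\left(\frac{k}{b_n}\right)c_{n,k}-\int_0^\infty f\phi\,dx\right],
\end{equation*}
and even granting the (valid, for smooth $\phi$) expansion $c_{n,k}=\phi(k/b_n)+b_n^{-1}\phi'(k/b_n)+\frac{k}{2b_n^2}\phi''(k/b_n)+o(b_n^{-1})$, the identification requires comparing the Riemann sum $\frac{1}{b_n}\sum_k(f\phi)(k/b_n)$ with $\int f\phi$ to accuracy $o(1/b_n)$. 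For $f$ merely continuous this error is of order $\omega(f\phi;1/b_n)$, which after multiplication by $b_n$ is unbounded; even inserting the a priori smoothness from your first part ($\omega_N^2(f,\delta)=O(\delta^2\log(1/\delta))$, so that the first-order Euler--Maclaurin correction vanishes by compact support of $\phi$), the residual error is $O(\omega((f\phi)';1/b_n))$, which multiplied by $b_n$ still diverges like $\log b_n$. So the weak-$*$ limit $g$ cannot be identified with $\frac{x}{2}f''$ by this route; this atomic-adjoint obstruction is precisely why the classical inverse saturation theorems for discrete operators (Bernstein, Sz\'asz) are not proved by distributional compactness.

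The paper's actual proof is entirely different and elementary: a convexity (parabola) argument needing no compactness and no Voronovskaja relation. Since $\mu_{n,2}(x)=x/b_n=S_n(t^2;x)-x^2$ by Lemma \ref{L5}(b), the hypothesis (\ref{glo18}) rewrites as $\pm f(x)\leq\frac{M_N}{w_N(x)}\left(S_n(t^2;x)-x^2\right)+S_n(\pm f;x)$; the moment representation (\ref{e23}) gives $S_n(t^{N+2};x)\geq x^NS_n(t^2;x)$, hence $S_n(t^2;x)/w_N(x)\leq S_n(t^2/w_N(t);x)$, and therefore the functions $G_\pm(t)=M_Nt^2/w_N(t)\pm f(t)$ satisfy $S_n(G_\pm;x)\geq G_\pm(x)$ for all $n$ and $x$. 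By the classical equivalence (\ref{glo19}) this forces $G_\pm$ to be convex, whence $w_N(x)\left|\Delta_h^2f(x)\right|\leq M_Nw_N(x)\Delta_h^2\left[t^2/w_N(t)\right](x)\leq M_Nh^2$, i.e.\ $f\in Lip_N^2\,2$ directly --- note this even bypasses your weak-$*$ detour's intermediate conclusion $f''\in L^\infty$ (your final Taylor step from $w_N|f''|\leq 2M_N$ to $f\in Lip_N^2\,2$ was fine, but the input to it is never established). If you want to salvage your outline, the convexity argument is the missing ingredient that must replace the identification step.
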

\begin{proof}
For any $f \in C_N$ one has
\begin{equation}\label{glo19}
f \quad \text{is convex on} \quad [0, \infty) \quad \text{iff} \quad S_n(f;x) \geq f(x) \quad \text{for all} \quad n \in \mathbb{N}, x \geq 0.
\end{equation}
Furthermore, the representation (\ref{e23}) implies 
\begin{eqnarray*}
S_n(t^{N+2};x) & = & x^{N+2} + \dfrac{(N+2)(N+1)}{2b_n}x^{N+1}+ \cdot + \dfrac{x}{b_n^{N+1}} \\ & \geq & x^{N+2} + \dfrac{x^{N+1}}{b_n} = x^{N}S_n(t^2;x)
\end{eqnarray*}
for all $N \in \mathbb{N} \cup {0}$, so that
\begin{equation}\label{glo20}
S_n(t^2;x)/w_N(x) \leq S_n(t^2/w_N(t);x).
\end{equation}
In view of lemma \ref{L3}(a)-(b), lemma \ref{L5}(b) and equation (\ref{glo18}), for $f \in C_N$
\begin{equation*}
\pm f(x) \leq \dfrac{M_N}{w_N(x)} \left( S_n(t^2;x) - x^2 \right) + S_n(\pm f(t); x).
\end{equation*}
Then one has by (\ref{glo20})
\begin{eqnarray*}
\dfrac{M_N x^2}{w_N(x)} \pm f(x) & \leq & M_N \dfrac{S_n(t^2;x)}{w_N(x)} + S_n(\pm f(t);x) \\ & \leq & S_n \left(\dfrac{M_Nt^2}{w_N(t)} \pm f(t);x \right).
\end{eqnarray*}
Therefore $\dfrac{M_N t^2}{w_N(t)} \pm f(t)$ are convex functions on $[0, \infty)$, that is,
\begin{equation*}
\Delta_h^2\left[\dfrac{M_N t^2}{w_N(t)} \pm f(t)\right](x) \geq 0.
\end{equation*}
In other words
\begin{equation*}
w_N(x)\left|\Delta_h^2 f(x) \right| \leq M_Nw_N(x)\Delta_h^2 \left[\dfrac{t^2}{w_N(t)}\right](x) \leq M_N h^2.
\end{equation*}
This proves that $f \in Lip_N^2 2 $.
\end{proof}

\section{Examples, graphs and analysis}
The modification (\ref{RBG1}) of (\ref{Mir}) is due to introduction of sequence $\left( b_n \right)_{1}^{\infty}$ which is an increasing sequence of real numbers with $b_n \rightarrow \infty$ as $n \rightarrow \infty$ and $b_1 \geq 1$. There are many such sequences in existence. Consider, for example, $b_n = r^n$, with $r > 1$ or take $b_n = n^m$ with $m \geq 1$. In case of these examples, there is an $N$ for which $b_n > n$ for all $n \geq N$. If we consider for $b_n$, $b_n = \sum_{N=1}^{n} \dfrac{1}{N^p}$, where $0 \leq p \leq 1$, then one can easily check that the sequence $\left( b_n \right)$ is increasing sequence with $b_1 \geq 1$, $b_n \rightarrow \infty$ as $n \rightarrow \infty$ and $b_n \leq n$ for all $n$. These sequences are well controlled and ordered sequences as in these cases, we get $ \sum_{N=1}^{n} \frac{1}{N} < \sum_{N=1}^{n} \frac{1}{N^{p_1}} < \sum_{N=1}^{n} \frac{1}{N^{p_2}} < n $ for $ 0 < p_2 < p_1<1 $. Some graphs of functions under the operators $M_n$ and $S_n$ represented by equations (\ref{Mir}) and (\ref{RBG1}), respectively, are given below showing how the choices of sequence $\left( b_n \right)$ affects the approximation procedure by the operators. As these operators are representations in the form of infinite series, we consider truncation of the series by fixing the values of summation index $k$. \\
Figures (\ref{F1a}-\ref{F1c}), represent graphs of $f(x) = e^x$ (Magenta), graphs of $M_n(f;x)$ (Blue), graphs of $S_n(f;x)$ where $b_n = \sum_1^n (1/N)$ (Red) and $b_n = \sum_1^n(1/\sqrt{N})$ (Green) for $n = 3 \, \text{to} \, 15$, $k = 50$ corresponding to $x \in [0,2]$, $x \in [0,4]$ and $x \in [0, 6]$, respectively. It can be observed that as interval expands the graphs of better estimate get degenerate first. Figures (\ref{F2a}-\ref{F2c}), represent the same graphs for $k = 100$, It can be seen that the effect of degeneration is now delayed. Figures (\ref{F4a}-\ref{F4c}), represent the same graphs for $k = 100$ but for $n = 20 \, \text{to} \, 30$, where more consolidation of graphs can be seen. Figure (\ref{F5b}) plots $M_n(f;x)$ (Blue) for $n = 20 \, \text{to} \, 30$, while $S_n(f;x)$ where $b_n = \sum_1^n (1/N)$ (Red) and $b_n = \sum_1^n(1/\sqrt{N})$ (Green) for $n = 120 \, \text{to} \, 130$. The graphs of $S_n(f;x)$ where $b_n = \sum_1^n(1/\sqrt{N})$ (Green) for $n = 120 \, \text{to} \, 130$ seem to be achieving the same accuracy as $M_n(f;x)$ for $n = 20 \, \text{to} \, 30$. Figure (\ref{F5c}) plots $M_n(f;x)$ for $n =7, 8, 9$ and $S_n(f;x)$ where $b_n = \sum_1^n(1/\sqrt{N})$ (Red) for $n = 78 \, \text{to} \, 95$. We have more consolidation for $S_n(f;x)$ then for $M_n(f;x)$. Figures (\ref{F6a}-\ref{F6b}) show similar comparison for $f(x) = \sin x$ (Magenta), $x \in [0, 2\pi]$, $k = 100$ and $k = 120$, respectively, for $M_n(f;x)$ (Blue) for $n = 20 \, \text{to} \, 25$, $S_n(f;x)$ where $b_n = \sum_1^n(1/\sqrt{N})$ (Red) for $n = 80 \, \text{to} \, 100$ and $S_n(f;x)$ where $b_n = \sum_1^n(1/N)$ (Green) for $n = 80 \, \text{to} \, 100$.
\begin{figure}
\centering
\begin{subfigure}[b]{0.3\textwidth}
\includegraphics[width=\textwidth]{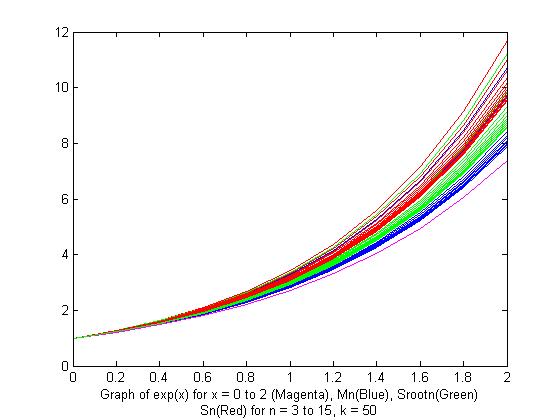}
\caption{}
\label{F1a}
\end{subfigure}
\begin{subfigure}[b]{0.3\textwidth}
\centering
\includegraphics[width=\textwidth]{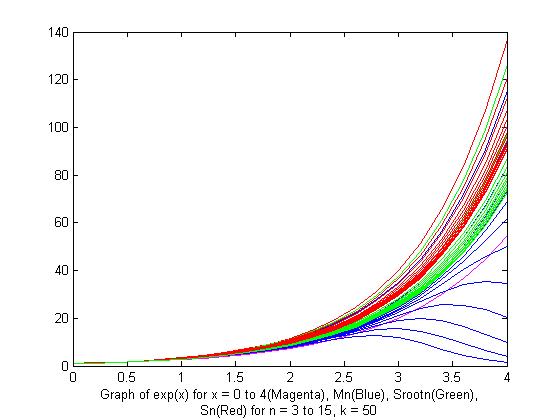}
\caption{}
\label{F1b}
\end{subfigure}
\begin{subfigure}[b]{0.3\textwidth}
\centering
\includegraphics[width=\textwidth]{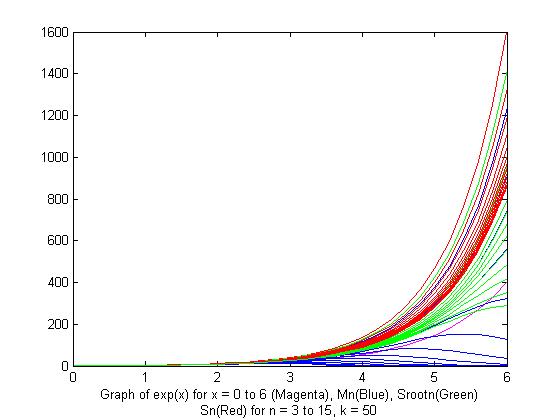}
\caption{}
\label{F1c}
\end{subfigure}
\caption{}
\label{F1}
\end{figure}
\begin{figure}
\centering
\begin{subfigure}[b]{0.3\textwidth}
\includegraphics[width=\textwidth]{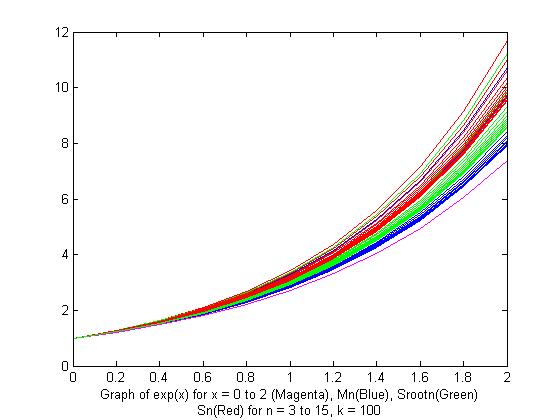}
\caption{}
\label{F2a}
\end{subfigure}
\begin{subfigure}[b]{0.3\textwidth}
\centering
\includegraphics[width=\textwidth]{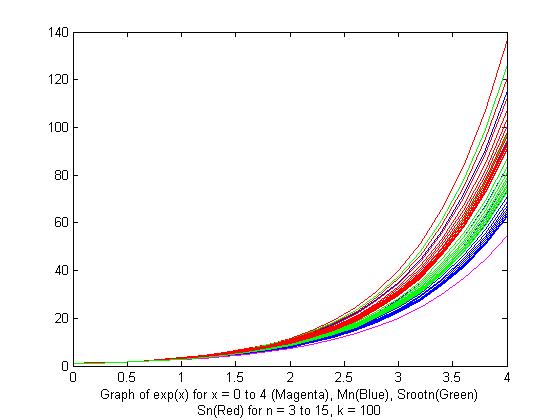}
\caption{}
\label{F2b}
\end{subfigure}
\begin{subfigure}[b]{0.3\textwidth}
\centering
\includegraphics[width=\textwidth]{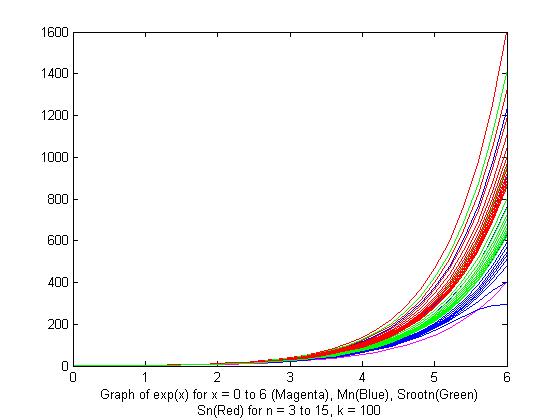}
\caption{}
\label{F2c}
\end{subfigure}
\caption{}
\label{F2}
\end{figure}
\begin{figure}
\centering
\begin{subfigure}[b]{0.3\textwidth}
\includegraphics[width=\textwidth]{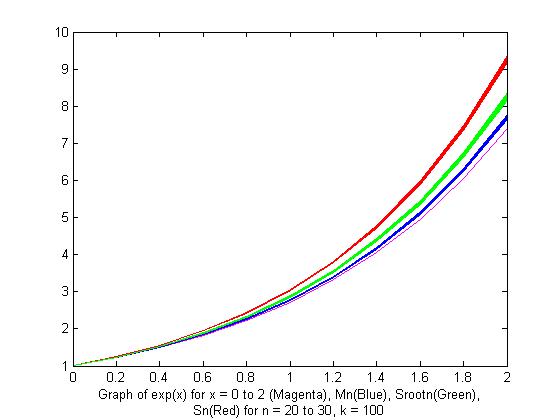}
\caption{}
\label{F4a}
\end{subfigure}
\begin{subfigure}[b]{0.3\textwidth}
\centering
\includegraphics[width=\textwidth]{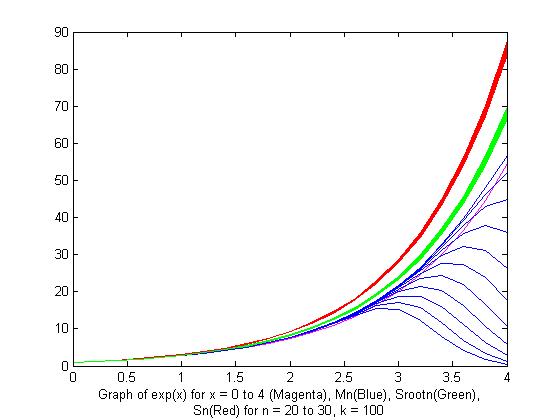}
\caption{}
\label{F4b}
\end{subfigure}
\begin{subfigure}[b]{0.3\textwidth}
\centering
\includegraphics[width=\textwidth]{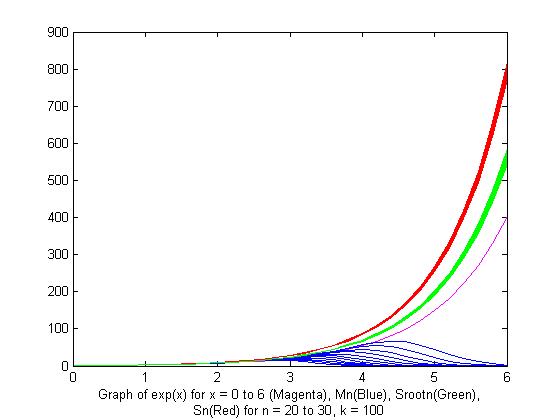}
\caption{}
\label{F4c}
\end{subfigure}
\caption{}
\label{F4}
\end{figure}
\begin{figure}[h]
\centering
\begin{subfigure}[b]{0.49\textwidth}
\includegraphics[width=\textwidth]{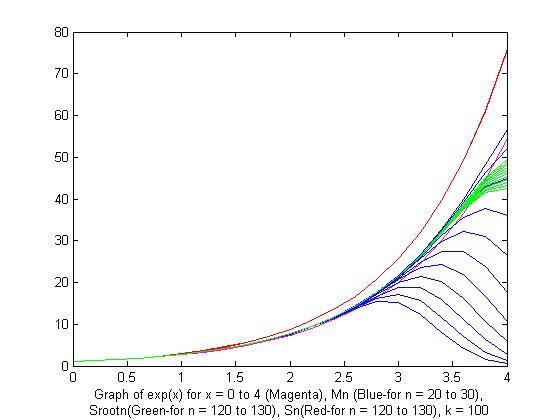}
\caption{}
\label{F5b}
\end{subfigure}
\begin{subfigure}[b]{0.49\textwidth}
\includegraphics[width=\textwidth]{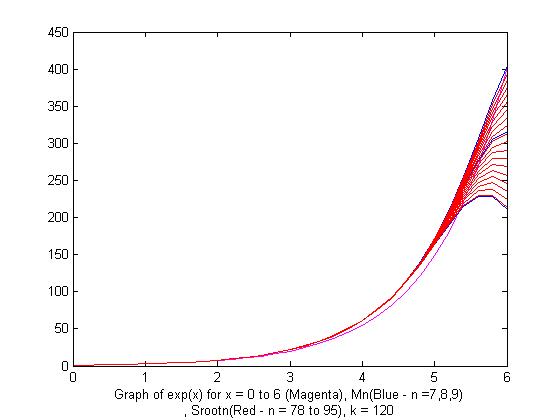}
\caption{}
\label{F5c}
\end{subfigure}
\caption{}
\end{figure}
\label{F5}
\begin{figure}
\centering
\begin{subfigure}[b]{0.49\textwidth}
\centering
\includegraphics[width=\textwidth]{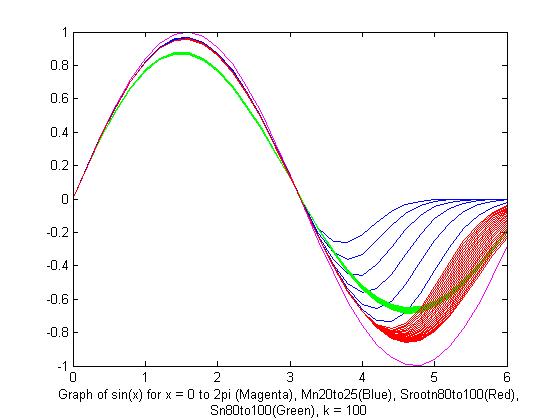}
\caption{}
\label{F6a}
\end{subfigure}
\begin{subfigure}[b]{0.49\textwidth}
\centering
\includegraphics[width=\textwidth]{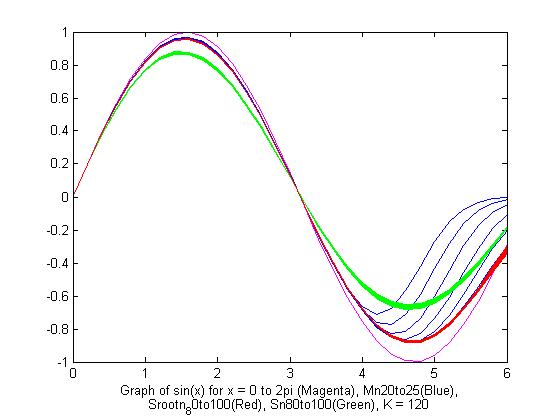}
\caption{}
\label{F6b}
\end{subfigure}
\caption{}
\label{F6}
\end{figure}


\begin{thebibliography}{99}


\bibitem{Alt} Altomare, P., Campiti, M.; \emph{Korovkin-type Approximation Theory and its applications}, Walter de Gruyter, Berlin - New York (1994).

\bibitem{Bec} Becker, M.; \emph{Global Approximation Theorems for Sz\'{a}sz-Mirakjan and Baskakov Operators in Polynomial Weight Spaces}, Indiana Univ. Math. J., 27 (1978), 127 - 142.



\bibitem{Der} Derriennic, M.M.; \textit{Sur I’approximation des fonctions d’une ou plusieurs variables par des polynomes de Bernstein modifi\'{e}s et application au probl\'{e}me des moments}, Th\'{e}se de 3e cycle, Universit\'{e} de Rennes, (1978).

\bibitem{Dur} Durrmeyer, J.L.; \textit{Une formule d’inversion de la transform\'{e}e de Laplace: Applications \'{a} la
th\'{e}orie des moments}, Th\'{e}se de 3e cycle, Facult\'{e} des Sciences de I’Universit\'{e} de Paris, (1967).

\bibitem{Gad} Gadzhiev, A.D.; \emph{Theorems of the type of P P Korovkin theorems}, Math. Zametki, 20(5), 781–786, (1976). English translation in Math. Notes, 20(5–6), (1976), 996–998




\bibitem{Maz} Mazhar, S.M., Totik, V.; \emph{Approximation by modified Sz\'{a}sz operators}, Acta Sci. Math., 49(1985), 257 - 269.

\bibitem{Mir} Mirakjan, G.M.; \emph{Approximation of continuous functions with the aid of polynomials ... .} Dokl. Akad. Nauk SSSR 31 (1941), 201 - 205. 

\bibitem{RBG1} Mishra, V.N., Gandhi, R.B.; \textit{Simultaneous approximation by Sz\'{a}sz-Mirakjan-Stancu-Durrmeyer type operators}, Periodica Mathematica Hungarica, accepted on January 18, 2016, in press.

\bibitem{RBG2} Mishra, V.N., Gandhi, R.B., Mohapatra, R.N.; \textit{A Summation-integral type modification of Sz\'{a}sz-Mirakjan-Stancu operators}, Journal of numerical analysis and approximation theory, accepted on March 22, 2016, in press.

\bibitem{RBG} Mishra, V.N., Gandhi, R.B., Nasaireh, F.; \textit{Simultaneous approximation by Sz\'{a}sz-Mirakjan-Durrmeyer-type operators}, Bollettino dell'Unione Matematica Italiana, 8(4), 297-305. DOI 10.1007/s40574-015-0045-x. 


\bibitem{Sza} Sz\'{a}sz, O.; \emph{Generalization of S.Bernstein's polynomials to the infinite interval.} J. Res. Nat. Bur. Standards Sect. B. 45(1950), 239 - 245.


\bibitem{Wal1} Walczak, Z.; \emph{On modified Szasz–Mirakyan operators}, Novi Sad Journal of Mathematics 33, (1) (2003), 93–107.



\end{thebibliography}
\end{document}